\definecolor{blu}{rgb}{0,0,0.1}
\def\itemize{
  \ifnum\@itemdepth>3\@toodeep\else
    \advance\@itemdepth\@ne
    \edef\@itemitem{labelitem\romannumeral\the\@itemdepth}%
        \list{\csname\@itemitem\endcsname}%
      {\leftmargin=20pt\def\makelabel##1{\hss\llap{##1}}}
        \fi}
\renewenvironment{enumerate}{%
  \ifnum \@enumdepth >3 \@toodeep\else
      \advance\@enumdepth \@ne
      \edef\@enumctr{enum\romannumeral\the\@enumdepth}\list
      {\csname label\@enumctr\endcsname}{\usecounter
        {\@enumctr}\leftmargin=20pt\def\makelabel##1{\hss\llap{\upshape##1}}}\fi
}{%
  \endlist
}
   \def\LaTeX{\leavevmode L\raise.42ex
       \hbox{\kern-.3em\size{\sf@size}{0pt}\selectfont A}\kern-.15em\TeX}
   \newcommand{\BibTeX}{{\rm B\kern-.05em{\sc
             i\kern-.025emb}\kern-.08em\TeX}}
\def\bbm[#1]{\mbox{\boldmath $#1$}}
   \newcommand{\e }{\varepsilon }
   \renewcommand{\O }{\Omega }
   \newcommand{\intr }{\int_{\R^N}}
   \newcommand{\into }{\int_{\Omega}}
   \newcommand{\R}{{\mathbb{R}}}
\newcommand{\cal}{\mathcal }
   \newcommand{\beq}{\begin{equation}}
   \newcommand{\eeq}{\end{equation}}
   \renewcommand{\theequation}{\thesection.\arabic{equation}}
\numberwithin{equation}{section}
   \newtheorem{theorem}{Theorem}[section]
   \newtheorem{proposition}[theorem]{Proposition}
   \newtheorem{lemma}[theorem]{Lemma}
   \newtheorem{remark}[theorem]{Remark}
   \newcommand{\bremark}{\begin{remark} \em}
   \newcommand{\eremark}{\end{remark} }
\DeclareMathOperator{\dist}{dist}
\def\bbm[#1]{\mbox{\boldmath $#1$}}
\def\bbm[#1]{\mbox{\boldmath $#1$}}
\begin{document}

\title[Nodal solutions for slightly subcritical elliptic problems]
{Multi-bubble nodal solutions for slightly subcritical elliptic problems in domains with
symmetries}

\author{Thomas Bartsch \& Teresa D'Aprile \& Angela Pistoia}
\address{Thomas Bartsch, Mathematisches Institut, Justus-Liebig-Universit\"at Giessen,
Arndtstr. 2, 34392 Giessen, Germany.}
\email{Thomas.Bartsch@math.uni-giessen.de}
\address{Teresa D'Aprile, Dipartimento di Matematica, Universit\`a di Roma ``Tor
Vergata", via della Ricerca Scientifica 1, 00133 Roma, Italy.}
\email{daprile@mat.uniroma2.it }
\address{Angela Pistoia, Dipartimento di Metodi e Modelli Matematici, Universit\`a di Roma
``La Sapienza", via Antonio Scarpa 165, 00161 Roma, Italy.}
\email{pistoia@dmmm.uniroma1.it }

\thanks{T.B. has been supported by the Vigoni Project 50766047.}

\thanks{T.D. and A.P. have been supported by  the Italian PRIN Research Project 2009
\textit{Metodi variazionali e topologici nello studio dei fenomeni non lineari}}

\begin{abstract} We study the existence of sign-changing solutions  with multiple
\textit{bubbles} to the slightly subcritical problem
$$-\Delta u=|u|^{2^*-2-\e}u \,\hbox{ in }\Omega, \quad u=0 \,\hbox{ on }\partial \Omega,$$
where $\Omega$ is a smooth bounded domain in $\R^N$, $N\geq 3$, $2^*=\frac{2N}{N-2}$ and
$\e>0$ is a small parameter. In particular we prove that if $\Omega$ is convex and satisfies
a certain symmetry, then a nodal four-bubble solution exists with two positive and two
negative bubbles. \bigskip

\noindent {\bf Mathematics Subject Classification 2000:} 35B40, 35J20, 35J65

\noindent {\bf Keywords:} slightly subcritical problem, sign-changing solutions,
finite-dimensional reduction, max-min argument

\end{abstract}
\maketitle
\section{Introduction}
We are concerned with the slightly subcritical elliptic problem
\begin{equation}\label{eq1}
\left\{\begin{aligned}
&-\Delta u=|u|^{2^*-2-\e}u &\hbox{ in }&\Omega,\\
&u=0 &\hbox{ on }&\partial \Omega,
\end{aligned}\right.
\end{equation}
where $\Omega$ is a smooth and bounded domain in $\R^N$, $N\geq 3$, $\e>0$ is a small parameter.
Here $2^*$ denotes the critical exponent in the Sobolev embeddings, i.e. $2^*=\frac{2N}{N-2}$.

In \cite{poho} Poho\u{z}aev proved that the problem \eqref{eq1} does not admit a nontrivial
solution if $\Omega$ is star-shaped and $\e\leq0$. On the other hand problem \eqref{eq1} has
a positive solution if $\e\leq 0$ and $\Omega$ is an annulus, see Kazdan and Warner \cite{kaz}.
In \cite{baco} Bahri and Coron found a positive solution to  \eqref{eq1} with $\e=0$ provided
that the domain $\Omega$ has a \textit{nontrivial topology}. Moreover in \cite{delfemu1,delfemu2,delfemu3,pire} the authors considered the slightly
supercritical case where $\e<0$ is close to $0$ and proved solvability of \eqref{eq1} in
Coron's situation of a domain with one or more small holes.

In the subcritical case $\e>0$ the problem \eqref{eq1} is always solvable, since a positive
solution $u_\e$ can be found by solving the variational problem
$$\inf\left\{\into |\nabla u|^2\,\Big|\, u\in H^1_0(\Omega),\, \|u\|_{2^*-\e}=1\right\}.$$
In \cite{brepe,fluwe,han,rey1,rey2}  it was proved that, as
$\e\to 0^+$,  $u_\e$ blows up and concentrates at a point $\xi$ which is a critical point of
the Robin's function of $\Omega$. In addition to the one-peak solution $u_\e$, several papers
have studied concentration phenomena for positive solutions of \eqref{eq1} with multiple
blow-up points (\cite{balirey,rey0}). In a convex domain such a phenomenon
cannot occur. Grossi and Takahashi \cite{grotak} proved the nonexistence of positive
solutions for the problem \eqref{eq1} blowing up at more than one  point. On the other
hand, multi-peak nodal solutions always exist for problem \eqref{eq1} in a general bounded and
smooth domain $\Omega$. Indeed, in \cite{bamipi}  a solution with exactly one positive and one
negative blow-up point is constructed for the problem \eqref{eq1} if $\e>0$ is sufficiently
small. The location of the two  concentration points is also characterized and depends on the
geometry of the domain. Moreover the presence of sign-changing solutions with a multiple
blow-up at a single point has been proved in \cite{mupi1,piwe} for problem
\eqref{eq1}; such solutions have the shape of towers of alternating-sign bubbles, i.e. they
are superpositions of positive bubbles and negative bubbles blowing-up at the same point with
a different velocity.  We also quote the paper \cite{bep}, where the authors  study   the blow up of the low energy sign-changing solutions of problem \eqref{eq1}
and they   classify these solutions according
to the concentration speeds of the positive and negative part.
Finally, we   mention the papers \cite{ba} and \cite{bawe} where, by a
different approach, the authors provide existence and multiplicity of sign-changing
solutions for more general problems than \eqref{eq1}. These papers are however not concerned
with the profile of the solutions.

In this paper we deal with the construction of sign-changing solutions which develop a
spike-shape as $\e\to 0^+$, blowing up positively at some points and negatively at other
points, generalizing the double blowing up obtained in \cite{bamipi}. We are able to prove
that on certain domains $\O$, \eqref{eq1} admits solutions with exactly two positive and two
negative blow-up points. Moreover, the asymptotic profile of the blow-up of these solutions
resembles a \textit{bubble}, namely a solution of the equation at the critical exponent in
the entire $\R^N$. It is natural to ask about the existence of solutions with $k$ blow-up
points, also for $k\ne 2,4$, and in more general domains. We shall discuss this difficult
problem below.

In order to formulate the conditions on the domain $\O$, we need to introduce some notation.
Let us denote by $G(x,y)$ the Green's function of $-\Delta$ over $\Omega$ under Dirichlet
boundary conditions; so $G$ satisfies
\begin{equation*}
\left\{\begin{aligned}
&-\Delta_yG(x,y)=\delta_x(y) &\hbox{ }&y\in\Omega,\\
&G(x,y)=0 &\hbox{ }&y\in\partial \Omega,
\end{aligned}\right.
\end{equation*}
where $\delta_x$ is the Dirac mass at $x$. We denote by $H(x,y)$ its regular part, namely
$$H(x,y)=\frac{1}{(N-2)\sigma_N|x-y|^{N-2}}-G(x,y),$$
where $\sigma_N$ is the surface measure of the unit sphere in $\R^N$. The diagonal $H(x,x)$
is called the Robin's function of the domain $\Omega$.

Here are our assumptions on $\O$.
\begin{enumerate}
\item[(A1)] $\Omega\subset \R^N$, $N\geq 3$, is a bounded domain with a ${\cal C}^2$-boundary.
\item[(A2)] $\O$ is invariant under the reflection $(x_1,x')\mapsto(x_1,-x')$ where
$x_1\in\R$, $x'\in\R^{N-1}$.
\end{enumerate}
For simplicity of notation we write the restrictions of $G$ and $H$ to the $x_1$-axis as $g$
and $h$ respectively, i.e.
$$
g(t,s)=G((t,0,\ldots, 0), (s,0,\ldots, 0))\quad\text{ and }\quad
h(t,s)=H((t,0,\ldots, 0), (s,0,\ldots, 0)).
$$
Our last assumption concerning the domain is:
\begin{enumerate}
\item[(A3)] There exists a connected component $(a,b)$ of the set
$\{t\,|\,(t,0,\ldots, 0)\in \Omega\}\subset \R$ such that
\begin{equation}\label{prop1}
\hbox{ the function } (a,b)\ni t \mapsto h(t, t)\hbox{ is convex}\end{equation}
and
\begin{equation}\label{prop2}
\hbox{ for any }t,s\in (a,b),\,t\neq s : \; (t-s)\frac{\partial g}{\partial t}(t,s)<0.
\end{equation}
\end{enumerate}

We can now state our main result.

\begin{theorem}\label{th1}
If $\O$ satisfies (A1), (A2), (A3), then for $\e>0$ sufficiently small problem \eqref{eq1} has
a solution $u_\e$ with the following property. There exist numbers $\lambda_i^\e>0$ and points
$\xi_i^\e=(t_i^\e,0,\ldots, 0)\in\Omega$ with $t_i^\e\in(a,b)$, $i=1,2,3,4$, such that
$$
u_\e(x)
=\alpha_N\sum_{i=1}^4(-1)^{i+1}\bigg(
  \frac{\lambda_{i}^\e\e^{\frac{1}{N-2}}}{\e^{\frac{2}{N-2}}\lambda_i^\e+|x-\xi_i^\e|^2}
  \bigg)^{\frac{N-2}{2}}+o(1)
\hbox{ uniformly in }\overline{\Omega};
$$
here $\alpha_N=(N(N-2))^{(N-2)/4}$. Moreover, the numbers $\lambda_i^\e$ are bounded above and
below away from zero, and the numbers $t_i^\e$ are aligned on $(a,b)$ and remain uniformly
away from the boundary and from one another, i.e.
$$\delta<\lambda_i^\e<\frac{1}{\delta}\quad \forall i=1,2,3,4,$$
and
$$a+\delta<t_1^\e<t_2^\e<t_3^\e<t_4^\e<b-\delta, \quad t_{i+1}^\e-t_i^\e>\delta\quad
 \forall i=1,2,3,$$ for some $\delta>0.$
\end{theorem}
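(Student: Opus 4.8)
The plan is to use the standard Lyapunov--Schmidt finite-dimensional reduction, tailored to four alternating-sign bubbles, combined with a max--min variational argument in the reduced parameter space. First I would set up the ansatz: for $\e>0$ small, parameters $\lambda=(\lambda_1,\dots,\lambda_4)$ with $\delta\le\lambda_i\le 1/\delta$, and aligned points $t=(t_1,\dots,t_4)$ with $a+\delta\le t_1<t_2<t_3<t_4\le b-\delta$ and $t_{i+1}-t_i\ge\delta$, one forms the approximate solution
\[
W_{\e,\lambda,t}(x)=\sum_{i=1}^4(-1)^{i+1}PU_{\delta_i,\xi_i}(x),
\]
where $\xi_i=(t_i,0,\dots,0)$, $\delta_i=\e^{1/(N-2)}\lambda_i^{1/2}$ (so that $\delta_i\to 0$ as $\e\to0$), $U_{\delta,\xi}$ is the standard bubble $\alpha_N\big(\delta/(\delta^2+|x-\xi|^2)\big)^{(N-2)/2}$ solving $-\Delta U=U^{2^*-1}$ in $\R^N$, and $PU$ denotes its projection onto $H^1_0(\Omega)$ (i.e. $-\Delta PU_{\delta,\xi}=U_{\delta,\xi}^{2^*-1}$ in $\Omega$, $PU_{\delta,\xi}=0$ on $\partial\Omega$). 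Because $\Omega$ is invariant under $(x_1,x')\mapsto(x_1,-x')$ by (A2), I would work in the subspace of $H^1_0(\Omega)$ of functions even in $x'$; this lets the points stay on the $x_1$-axis and reduces the parameters to the $\lambda_i$ and $t_i$.

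\textbf{The reduction.} Writing a solution as $u=W_{\e,\lambda,t}+\phi$ and letting $K_{\e,\lambda,t}$ be the direct sum of the approximate kernels spanned by $\partial_{\lambda_i}PU_{\delta_i,\xi_i}$ and $\partial_{t_i}PU_{\delta_i,\xi_i}$, one projects the equation onto $K_{\e,\lambda,t}$ and its orthogonal complement $K_{\e,\lambda,t}^\perp$. The standard steps are: (i) estimate the error $\|-\Delta W_{\e,\lambda,t}-|W_{\e,\lambda,t}|^{2^*-2-\e}W_{\e,\lambda,t}\|_*$ in the appropriate dual norm, getting a bound of order $\e^{\theta}$ for some $\theta>0$ uniformly over the admissible parameter range (here the uniform separation $t_{i+1}-t_i\ge\delta$ and $\dist(\xi_i,\partial\Omega)\ge\delta$ keeps the bubble interactions under control); (ii) prove invertibility of the linearized operator $L_{\e,\lambda,t}=\Pi_{K^\perp}\big(-\Delta-(2^*-1-\e)|W|^{2^*-2-\e}\big)\Pi_{K^\perp}$ on $K_{\e,\lambda,t}^\perp$, with a uniform bound $\|L_{\e,\lambda,t}^{-1}\|\le C$ — this uses the nondegeneracy of the standard bubble and the fact that the bubbles live at well-separated scales/locations; (iii) a contraction-mapping argument then yields a unique small $\phi=\phi_{\e,\lambda,t}\in K_{\e,\lambda,t}^\perp$, depending $C^1$ on $(\lambda,t)$, solving the projected problem, with $\|\phi_{\e,\lambda,t}\|=O(\e^\theta)$.

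\textbf{The reduced functional and the max--min.} It remains to solve the finite-dimensional system coming from the $K$-component. As usual this is equivalent to finding a critical point of the reduced energy $\mathcal J_\e(\lambda,t):=E_\e(W_{\e,\lambda,t}+\phi_{\e,\lambda,t})$, where $E_\e$ is the energy functional associated to \eqref{eq1}. The key expansion, obtained by plugging the ansatz into $E_\e$ and using known integral estimates for projected bubbles, has the schematic form
\[
\mathcal J_\e(\lambda,t)=4\,c_1+c_2\,\e\Big[c_3+\Psi(\lambda,t)\Big]+o(\e)
\]
uniformly on the admissible set, where the leading interaction term $\Psi$ collects the self-interaction contributions $\sum_i\big(\tfrac12\log\lambda_i+h(t_i,t_i)\lambda_i^{(N-2)/2}\big)$-type terms and the bubble--bubble interaction terms $-\sum_{i}g(t_i,t_{i+1})(\lambda_i\lambda_{i+1})^{(N-2)/4}$-type terms coming from adjacent opposite-sign bubbles (the precise constants depend on $N$). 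This is where assumption (A3) enters: convexity of $t\mapsto h(t,t)$ controls the diagonal (self-interaction) terms, while the monotonicity sign condition \eqref{prop2} on $\partial g/\partial t$ controls the off-diagonal interaction terms so that the function $\Psi$ has a max--min structure over the box $\{\delta\le\lambda_i\le1/\delta\}\times\{a+\delta\le t_1<\dots<t_4\le b-\delta,\ t_{i+1}-t_i\ge\delta\}$: one maximizes over some variables and minimizes over others, and the resulting critical value is \emph{stable} (i.e. not attained on the boundary of the admissible box for $\delta$ small enough and $\e$ small enough). A standard topological/degree argument then produces, for $\e$ small, a critical point $(\lambda^\e,t^\e)$ of $\mathcal J_\e$ lying in the interior of the admissible set, yielding the solution $u_\e=W_{\e,\lambda^\e,t^\e}+\phi_{\e,\lambda^\e,t^\e}$; the stated asymptotic profile and the bounds on $\lambda_i^\e$ and $t_i^\e$ follow from the construction and from $\|\phi\|\to0$.

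\textbf{Main obstacle.} I expect two points to require real work. The first is the precise derivation of the reduced energy expansion — in particular identifying the correct combination of $h(t_i,t_i)$-terms and $g(t_i,t_j)$-terms (only nearest neighbors contribute at leading order, because alternating signs make the interactions of same-sign non-adjacent bubbles higher order) and verifying that the remainder is genuinely $o(\e)$ uniformly over a non-compact-looking but effectively compact parameter region. The second, and genuinely the crux, is showing that (A3) forces the max--min value of $\Psi$ to be attained in the interior: one must choose which variables to maximize and which to minimize so that, using convexity of $h(t,t)$ and the sign of $\partial_t g$, no boundary configuration (two points colliding, a point hitting $\partial(a,b)$, or $\lambda_i$ hitting $\delta$ or $1/\delta$) can realize the critical value. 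Making this max--min linking topologically robust against the $o(\e)$ perturbation — so that it survives passage from $\Psi$ to the full $\mathcal J_\e$ — is the heart of the argument.
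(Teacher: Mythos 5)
Your overall framework (Lyapunov--Schmidt reduction plus a max--min in the reduced variables) is indeed the route the paper takes, and your description of the reduction step is consistent with Section~2 of the paper. However, there are two genuine problems with the finite-dimensional part, which is the actual content of the theorem. First, a structural claim you rely on is false: in the reduced energy the leading term is (after a change of variables) $\Psi=\tfrac12\sum_i\Lambda_i^2h(t_i,t_i)-\sum_{i<j}a_ia_j\Lambda_i\Lambda_j g(t_i,t_j)-\log(\Lambda_1\cdots\Lambda_4)$, so \emph{all} pairwise Green-function interactions enter at order $\e$, not only nearest neighbors. Since all four points stay at mutual distance of order one, the same-sign pairs $(1,3)$ and $(2,4)$ contribute terms $-\Lambda_i\Lambda_j g(t_i,t_j)$ with an unfavorable sign; far from being higher order, these terms are precisely the source of the difficulty (they destroy any naive coercivity or min--max ordering), and your proposed $\Psi$ with only adjacent interactions misses this.

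Second, the crux you correctly identify --- that the max--min value is attained in the interior and survives the $o(\e)$ perturbation --- is not actually proved, and the ``box'' $\{\delta\le\lambda_i\le1/\delta\}$ framing does not by itself give it. The paper's mechanism is quite specific: one restricts the min-set $K$ to configurations with $\Lambda_2=\Lambda_3$, for which the monotonicity hypothesis \eqref{prop2} yields the key inequality \eqref{kappa0} showing the bad same-sign interactions are dominated by the opposite-sign ones; the ambient set $\mathcal D$ is a sublevel set of the auxiliary functional $\Phi$ in which all interactions are taken with positive sign; the upper bound on the max--min level is obtained by a topological degree argument applied to the map $(\Lambda_1\Lambda_2,\Lambda_2\Lambda_3,\Lambda_3\Lambda_4,t_1,\dots,t_4)$, which forces any competitor $\eta(K)$ to contain a configuration with prescribed products and positions; and the boundary condition (P3) is verified by a long contradiction argument (Cases 1--5) that rules out blow-up of the $\Lambda_i$, approach of the $t_i$ to $a,b$, and collisions, using the convexity \eqref{prop1}, the sign condition \eqref{prop2}, and the boundary expansions of $H$ and $\partial H/\partial\nu$ (Lemma~\ref{robin}). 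None of these ingredients appear in your sketch, and without them the statement that ``no boundary configuration can realize the critical value'' is an assertion, not a proof; this is exactly the technical heart of the theorem.
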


Let us observe that the assumption (A3) is satisfied for a (not necessarily strictly) convex
domain $\Omega$ as a consequence of some properties of the Green's and the Robin's functions.
Indeed, \eqref{prop1} follows from the result in \cite{cara} according to which the Robin's
function of a convex domain is strictly convex. Moreover in a convex domain the function
$G(\cdot, y)$ is strictly decreasing (with non-zero derivative) along the half-lines starting
from $y$ (see Lemma \ref{lem}), hence \eqref{prop2} holds true. Assumption (A3) is also
satisfied for some non-convex domains, for instance those which are ${\cal C}^2$-close to
convex domains. It seems to be an open problem whether (A3) holds, for instance, on annuli.

The proof  of Theorem \ref{th1} relies on a Lyapunov-Schmidt reduction scheme. This reduces
the problem of finding multi-bubble solutions for \eqref{eq1} to the problem of finding
critical points of a functional which depends on points $\xi_i$ and scaling parameters
$\lambda_i$. The leading part of the reduced functional is explicitly given in terms of the
Green's and Robin's functions. The reduced functional has a quite involved behaviour, due to
the different interactions among the bubbles (which depends on their respective sign). The
symmetry of the domain plays a crucial role: indeed, the validity of the hypothesis (A2)
allows us to place the positive and negative bubbles alternating along the one-dimensional
interval $(a,b)$. Then we use a variational approach and we
obtain the existence of a saddle point by applying a max-min argument. An important step
is the proof of a compactness condition which ensures that the max-min level actually is a
critical value, and this is the most technical and difficult part of the proof.

As remarked above, it is natural to ask about other types of multibump solutions, and to
consider more general domains. First of all, the Lyapunov-Schmidt reduction scheme works in a
very general setting. In particular, (A2) and (A3) are not required for this. The problem lies
in finding critical points of the reduced functional. This problem seem to be very subtle. In
the paper \cite{badapi1} we consider the case of a ball and we show the existence of two
three-bubble solutions having different nodal properties. However, these solutions are not
found via a global variational argument and the proof strongly depends on the explicit formula
of the Green's and the Robin's function in a ball. It also seems very hard to weaken the
assumptions on the domain. In our argument we use the symmetry condition (A2) in order to
localize and order the peaks on the $x_1$-axis. Together with (A3) this allows comparison
arguments involving the Green's and Robin's functions which do not hold in general.

The paper is organized as follows. In Section 2 we sketch the finite-dimensional reduction
method. Section 3 is devoted to solving the reduced problem by the max-min procedure. Finally
in the Appendix A we collect some properties of the Green's function which are usually
referred to throughout the paper.

\section{The reduced functional}
The proof of Theorem \ref{th1}  is based on the \textit{finite dimensional reduction}
procedure which has been used for a wide class of singularly perturbed problems. We sketch
the procedure here and refer to  \cite{bamipi} for details. Related methods have been
developed in  \cite{delfemu1}-\cite{delfemu2}-\cite{delfemu3} where the almost critical
problem \eqref{eq1} was studied from the supercritical side. In this section the assumptions
(A2) and (A3) are not required.

For any $\e>0$ let us introduce the functions
$$
U_{\e,\lambda, \xi}(x)
 =\alpha_N\bigg(
   \frac{\lambda\e^{\frac{1}{N-2}}}{\lambda^2\e^{\frac{2}{N-2}}+|x-\xi|^2}\bigg)^{\frac{N-2}{2}},
\quad \alpha_N=(N(N-2))^{(N-2)/4},
$$
with $\lambda>0$ and $\xi\in\R^N$. These are actually all positive solutions of the limiting
equation
$$
-\Delta U=U^{2^*-1}\hbox{ in }\R^N,
$$
and constitute the extremals for the Sobolev's critical embedding (see \cite{au},
\cite{cagispru}, \cite{tal}). Fixing $k\geq 1$, we define the configuration space
$$
\cal{O}_{k} :=
\left\{({ \bbm[\lambda]},\bbm[\xi])
      =( \lambda_1, \ldots,\lambda_k,\xi_1,\ldots,\xi_k)\,\Bigg|\,
\begin{aligned}
&\delta<\lambda_i<\delta^{-1},\; \xi_i\in\Omega,\;
  {\rm{dist}}(\xi_i,\partial\Omega)>\delta \;\;\forall i\\
&|\xi_i-\xi_j|>\delta\,\hbox{ if }i\neq j
\end{aligned}
\right\}
$$
where $\delta>0$ is a sufficiently small number. For fixed integers
$a_1,\ldots, a_k\in\{-1,1\}$, we seek suitable scalars $\lambda_i$ and points $\xi_i$ such
that a solution $u$ exists for \eqref{eq1} with $u\approx \sum_{i=1}^ka_i U_{\e,\lambda_i,\xi_i}$.
In order to obtain a better first approximation, which satisfies the boundary condition, we
consider the projections ${\cal P}_\Omega U_{\e,\lambda, \xi}$ onto the space $H^1_0(\Omega)$ of
$U_{\e,\lambda, \xi}$, where the projection  ${\cal P}_\Omega:H^1(\R^N)\to H^1_0(\Omega)$ is
defined as the unique solution of the problem
$$
\left\{
\begin{aligned}
&\Delta {\cal P}_\Omega u=\Delta u &\hbox{ in } \Omega,\\
& {\cal P}_\Omega u=0&\hbox{ on }\partial \Omega.
\end{aligned}\right.$$
Then the following estimate holds
\begin{equation}\label{known}
{\cal P}_\Omega U_{\e,\lambda_i, \xi_i}= U_{\e,\lambda_i, \xi_i}+O(\sqrt{\e})
\end{equation}
uniformly with respect to $(\bbm[\lambda],\bbm[\xi])\in {\cal O}_k$. We look for a solution
to \eqref{eq1} in a small neighbourhood of the first approximation, i.e. a solution of the form
$$
u:=\sum_{i=1}^k a_i{\cal P}_\Omega  U_{\e,\lambda_i,\xi_i}+\phi,
$$
where the rest term $\phi$ is small. To carry out the construction of a solution of this type,
we first introduce an intermediate problem as follows.

We consider the spaces
$$
{\cal K}_{\e,\hbox{\scriptsize$\bbm[\lambda]$},\hbox{\scriptsize$\bbm[\xi]$}}
 ={\rm{span}}\left\{
   {\cal P}_\Omega \bigg(\frac{\partial U_{\e,\lambda_i, \xi_i}}{\partial \xi_i^j}\bigg),
   {\cal P}_\Omega \bigg(\frac{\partial U_{\e,\lambda, \xi}}{\partial \lambda_i}\bigg)\,\bigg|\,
   i=1,\ldots,k,\;j=1,\ldots,N\right\}
\subset H^1_0(\Omega),
$$
and
$$
{\cal K}^\perp_{\e,\hbox{\scriptsize$\bbm[\lambda]$},\hbox{\scriptsize$\bbm[\xi]$}}
=\left\{\phi\in H^1_0(\Omega)\,\bigg|\, \langle \phi,
   \psi\rangle:=\into \nabla \phi\nabla \psi=0\;\;
   \forall \psi\in {\cal K}_{\e,\hbox{\scriptsize$\bbm[\lambda]$},\hbox{\scriptsize$\bbm[\xi]$}}\right\}
\subset H^1_0(\Omega);
$$
here we denote by $\xi_i^j$ the $j$-th component of $\xi_i$. Then it is convenient to solve
as a first step the problem for $\phi$ as a function of $\e$, $\bbm[\lambda]$, $\bbm[\xi]$.
This turns out to be solvable for any choice of points $\xi_i$ and scalars $\lambda_i$,
provided that $\e$ is sufficiently small. The following result was established in \cite{bamipi}.

\begin{lemma}\label{reg}
There exists $\e_0>0$ and a constant $C>0$ such that for each $\e\in (0,\e_0)$ and each
$(\bbm[\lambda],\bbm[\xi])\in {\cal O}_k$ there exists a unique
$\phi_{\e,\hbox{\scriptsize$\bbm[\lambda]$},\hbox{\scriptsize$\bbm[\xi]$} }
 \in {\mathcal{K}}_{\e,\hbox{\scriptsize$\bbm[\lambda]$},\hbox{\scriptsize$\bbm[\xi]$}}^\perp$ satisfying
\begin{equation}\label{sati1}
\Delta(V_{\e,\hbox{\scriptsize$\bbm[\lambda]$},\hbox{\scriptsize$\bbm[\xi]$}}+\phi)
 +|V_{\e,\hbox{\scriptsize$\bbm[\lambda]$},\hbox{\scriptsize$\bbm[\xi]$}}+\phi|^{2^*-2-\e}
  (V_{\e,\hbox{\scriptsize$\bbm[\lambda]$},\hbox{\scriptsize$\bbm[\xi]$}}+\phi)
\in {\cal K}_{\e,\hbox{\scriptsize$\bbm[\lambda]$},\hbox{\scriptsize$\bbm[\xi]$}}
\end{equation}
and
\begin{equation}\label{sati2}
\|\phi\|:=\Big(\into|\nabla \phi|^2\Big)^{1/2}< C\e.
\end{equation}
Here
$V_{\e,\hbox{\scriptsize$\bbm[\lambda]$},\hbox{\scriptsize$\bbm[\xi]$} }
 =\sum_{i=1}^k a_i{\cal P}_\Omega  U_{\e,\lambda_i,\xi_i}$.
Moreover the map ${\cal O}_k \to H^1_0(\Omega)$,
$(\bbm[\lambda],\bbm[\xi]) \mapsto \phi_{\e,\hbox{\scriptsize$\bbm[\lambda]$},\hbox{\scriptsize$\bbm[\xi]$} }$
is ${\mathcal C}^1$.
\end{lemma}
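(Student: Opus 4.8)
The plan is to solve the intermediate equation \eqref{sati1} by a Lyapunov--Schmidt type fixed point argument in the Hilbert space $\mathcal K^\perp_{\e,\bbm[\lambda],\bbm[\xi]}$, uniformly in the parameters. Write $V:=V_{\e,\bbm[\lambda],\bbm[\xi]}$, $f(s):=|s|^{2^*-2-\e}s$, and let $i^*\colon L^{\frac{2N}{N+2}}(\Omega)\to H^1_0(\Omega)$ be the adjoint of the Sobolev embedding, so that $w=i^*(\psi)$ means $-\Delta w=\psi$ in $\Omega$, $w\in H^1_0(\Omega)$, with $\|i^*(\psi)\|\le C\|\psi\|_{\frac{2N}{N+2}}$. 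Denote by $\Pi,\Pi^\perp$ the orthogonal projections of $H^1_0(\Omega)$ onto $\mathcal K_{\e,\bbm[\lambda],\bbm[\xi]}$ and $\mathcal K^\perp_{\e,\bbm[\lambda],\bbm[\xi]}$. Then, for $\phi\in\mathcal K^\perp_{\e,\bbm[\lambda],\bbm[\xi]}$, \eqref{sati1} is equivalent to the fixed point identity $L(\phi)=N(\phi)+R$, where
$$L(\phi):=\Pi^\perp\bigl(\phi-i^*(f'(V)\phi)\bigr),\qquad N(\phi):=\Pi^\perp\, i^*\bigl(f(V+\phi)-f(V)-f'(V)\phi\bigr),\qquad R:=\Pi^\perp\bigl(i^*(f(V))-V\bigr);$$
here $L$ is the projected linearized operator at $V$, $N$ is the superlinear remainder and $R$ measures the error of the ansatz $V$.

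\textbf{Linear theory (the main obstacle).} One must show that $L\colon\mathcal K^\perp_{\e,\bbm[\lambda],\bbm[\xi]}\to\mathcal K^\perp_{\e,\bbm[\lambda],\bbm[\xi]}$ is an isomorphism with $\|L^{-1}\|\le C$, uniformly for $\e\in(0,\e_0)$ and $(\bbm[\lambda],\bbm[\xi])\in\mathcal O_k$. The argument is by contradiction: if $\e_n\to0$, $(\bbm[\lambda]^n,\bbm[\xi]^n)\in\mathcal O_k$ and $\phi_n\in\mathcal K^\perp$ satisfy $\|\phi_n\|=1$, $\|L(\phi_n)\|\to0$, one rescales $\phi_n$ around each $\xi_i^n$ at scale $\lambda_i^n\e_n^{1/(N-2)}$. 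Since the $\xi_i^n$ stay at mutual distance $>\delta$ and $\lambda_i^n\in[\delta,\delta^{-1}]$, the rescaled sequences decouple, and each weak limit $\Psi_i$ solves $-\Delta\Psi_i=(2^*-1)U^{2^*-2}\Psi_i$ on $\R^N$ (with $U(y)=\alpha_N(1+|y|^2)^{-\frac{N-2}{2}}$) and is orthogonal in $\mathcal D^{1,2}(\R^N)$ to the $(N+1)$-dimensional kernel generated by the dilations and translations of $U$; by the nondegeneracy of $U$ each $\Psi_i$ vanishes, and then the relation $\|L(\phi_n)\|\to0$ forces $\|\phi_n\|\to0$, a contradiction. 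This blow-up analysis, which has to follow all the well-separated bubbles at once, is the technical heart; the details are classical and carried out in \cite{bamipi}. The error term is estimated by standard means: using \eqref{known}, the decay of the $U_{\e,\lambda_i,\xi_i}$ and the expansion $s^{-\e}=1-\e\log s+O(\e^2\log^2 s)$ one gets $\|R\|\le C\e$, uniformly on $\mathcal O_k$.

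\textbf{Contraction and regularity.} By the Sobolev inequality and the standard estimates for the superlinear term, $\|N(\phi)\|\le C\|\phi\|^{p}$ with $p:=\min\{2,2^*-1-\e\}>1$, together with the Lipschitz bound $\|N(\phi_1)-N(\phi_2)\|\le C(\|\phi_1\|+\|\phi_2\|)^{p-1}\|\phi_1-\phi_2\|$. Hence, for a suitable $C_0>0$ and all small $\e$, the map $T(\phi):=L^{-1}(N(\phi)+R)$ maps the ball $\{\phi\in\mathcal K^\perp_{\e,\bbm[\lambda],\bbm[\xi]}:\|\phi\|\le C_0\e\}$ into itself and is a contraction there, since $\|T(\phi)\|\le\|L^{-1}\|(\|N(\phi)\|+\|R\|)\le C(C_0^{p}\e^{p}+\e)\le C_0\e$. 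Banach's theorem yields the unique $\phi_{\e,\bbm[\lambda],\bbm[\xi]}$ with $\|\phi_{\e,\bbm[\lambda],\bbm[\xi]}\|\le C_0\e<C\e$, i.e. \eqref{sati1}--\eqref{sati2}. Finally, for fixed $\e$ one applies the implicit function theorem to $\mathcal G(\bbm[\lambda],\bbm[\xi],\phi):=\Pi^\perp\bigl(V+\phi-i^*(f(V+\phi))\bigr)$, which is ${\mathcal C}^1$ jointly because $s\mapsto|s|^{2^*-2-\e}s$ is ${\mathcal C}^1$ and the maps $(\bbm[\lambda],\bbm[\xi])\mapsto\mathcal P_\Omega U_{\e,\lambda_i,\xi_i}$, as well as the projections $\Pi,\Pi^\perp$, depend smoothly on $(\bbm[\lambda],\bbm[\xi])$ and stay uniformly nondegenerate on $\mathcal O_k$; since $\partial_\phi\mathcal G=L+o(1)$ is invertible by the linear theory, the implicit function theorem gives the ${\mathcal C}^1$ dependence of $(\bbm[\lambda],\bbm[\xi])\mapsto\phi_{\e,\bbm[\lambda],\bbm[\xi]}$.
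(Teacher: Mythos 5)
The paper does not prove Lemma \ref{reg} itself: it is quoted from \cite{bamipi}, where exactly the scheme you describe (reformulation via the adjoint operator $i^*$, uniform invertibility of the projected linearized operator obtained by a blow-up/nondegeneracy argument, an error estimate of order $\e$, a contraction mapping in a ball of radius $C\e$, and the implicit function theorem for the ${\mathcal C}^1$ dependence on $(\bbm[\lambda],\bbm[\xi])$) is carried out. Your proposal is therefore correct in outline and follows essentially the same route as the paper's (cited) proof.
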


After this result, let us consider  the following energy functional associated with problem
\eqref{eq1}:
\begin{equation}\label{func1}
I_\e(u)=\frac12\into |\nabla u|^2 dx-\frac{1}{2^*-\e}\into |u|^{2^*-\e} dx ,\quad u\in H^1_0(\Omega).
\end{equation}
Solutions of \eqref{eq1} correspond to critical points of $I_\e$. Now we introduce the new
functional
\begin{equation}\label{jeps}
J_\e:{\cal O}_k\to \R, \quad J_\e(\bbm[\lambda],\bbm[ \xi])
 =I_\e(V_{\e,\hbox{\scriptsize$\bbm[\lambda]$},\hbox{\scriptsize$\bbm[\xi]$} }
       +\phi_{\e,\hbox{\scriptsize$\bbm[\lambda]$},\hbox{\scriptsize$\bbm[\xi]$} })
\end{equation}
where $\phi_{\e,\hbox{\scriptsize$\bbm[\lambda]$},\hbox{\scriptsize$\bbm[\xi]$}}$ has been constructed in
Lemma \ref{reg}. The next lemma has been proved in \cite{balirey} and reduces the original
problem \eqref{eq1} to the one of finding critical points of the functional $J_\e$.

\begin{lemma}\label{relation}
The pair $(\bbm[\lambda],\bbm[\xi])  \in {\cal O}_k$ is a critical point of $J_\e $ if and
only if the corresponding function
$u_\e=V_{\e,\hbox{\scriptsize$\bbm[\lambda]$},\hbox{\scriptsize$\bbm[\xi]$} }
       +\phi_{\e,\hbox{\scriptsize$\bbm[\lambda]$},\hbox{\scriptsize$\bbm[\xi]$} }$
is a solution of (\ref{eq1}).
\end{lemma}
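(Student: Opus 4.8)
The plan is to show that the finite-dimensional reduction is \emph{exact}, in the sense that the gradient of $J_\e$ controls the Lagrange multipliers appearing in \eqref{sati1}. First I would write, by Lemma \ref{reg}, that for each $(\bbm[\lambda],\bbm[\xi])\in\cal{O}_k$ there exist real coefficients $c_i^j=c_i^j(\e,\bbm[\lambda],\bbm[\xi])$, $i=1,\dots,k$, $j=0,1,\dots,N$, such that the function $u=u_{\e,\hbox{\scriptsize$\bbm[\lambda]$},\hbox{\scriptsize$\bbm[\xi]$}}:=V_{\e,\hbox{\scriptsize$\bbm[\lambda]$},\hbox{\scriptsize$\bbm[\xi]$}}+\phi_{\e,\hbox{\scriptsize$\bbm[\lambda]$},\hbox{\scriptsize$\bbm[\xi]$}}$ satisfies
$$
\Delta u+|u|^{2^*-2-\e}u
 =\sum_{i=1}^k\sum_{j=0}^N c_i^j\,{\cal P}_\Omega Z_i^j,
\qquad
Z_i^0:=\frac{\partial U_{\e,\lambda_i,\xi_i}}{\partial \lambda_i},\quad
Z_i^j:=\frac{\partial U_{\e,\lambda_i,\xi_i}}{\partial \xi_i^j}\ (j\ge1),
$$
i.e.\ $-I_\e'(u)$ lies in ${\cal K}_{\e,\hbox{\scriptsize$\bbm[\lambda]$},\hbox{\scriptsize$\bbm[\xi]$}}$. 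Consequently $u$ solves \eqref{eq1} if and only if all the $c_i^j$ vanish.

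Next I would differentiate $J_\e$. Since $J_\e(\bbm[\lambda],\bbm[\xi])=I_\e(u_{\e,\hbox{\scriptsize$\bbm[\lambda]$},\hbox{\scriptsize$\bbm[\xi]$}})$ and the map $(\bbm[\lambda],\bbm[\xi])\mapsto u_{\e,\hbox{\scriptsize$\bbm[\lambda]$},\hbox{\scriptsize$\bbm[\xi]$}}$ is ${\cal C}^1$ into $H^1_0(\Omega)$ (Lemma \ref{reg} plus the smoothness of ${\cal P}_\Omega U_{\e,\lambda,\xi}$ in its parameters), the chain rule gives, for each parameter $\mu\in\{\lambda_\ell,\xi_\ell^m\}$,
$$
\partial_\mu J_\e(\bbm[\lambda],\bbm[\xi])
 =\big\langle I_\e'(u),\,\partial_\mu u\big\rangle
 =-\sum_{i=1}^k\sum_{j=0}^N c_i^j\,\langle {\cal P}_\Omega Z_i^j,\ \partial_\mu u\rangle,
$$
using the characterization of $I_\e'(u)$ just displayed. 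Now decompose $\partial_\mu u=a_\ell\,{\cal P}_\Omega Z_\ell^m+\partial_\mu\phi+(\text{lower order})$; the term $\partial_\mu\phi$ is $I_\e'$-orthogonal issues aside simply handled because differentiating the constraint $\phi\in{\cal K}^\perp$ and the relation $\langle\phi,\psi\rangle=0$ shows that $\partial_\mu\phi$ has controlled components along ${\cal K}_{\e,\hbox{\scriptsize$\bbm[\lambda]$},\hbox{\scriptsize$\bbm[\xi]$}}$, of size $o(1)$ times $\|{\cal P}_\Omega Z_i^j\|$. Therefore
$$
\partial_\mu J_\e(\bbm[\lambda],\bbm[\xi])
 =-\sum_{i,j} c_i^j\Big(a_\ell\,\langle {\cal P}_\Omega Z_i^j,{\cal P}_\Omega Z_\ell^m\rangle
   +o(1)\Big).
$$

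The key algebraic fact is then the near-orthogonality and non-degeneracy of the family $\{{\cal P}_\Omega Z_i^j\}$: a standard computation (already recorded in \cite{bamipi}) shows that, after rescaling by the appropriate powers of $\e$, the Gram matrix $\big(\langle {\cal P}_\Omega Z_i^j,{\cal P}_\Omega Z_\ell^m\rangle\big)$ is, as $\e\to0$, a block-diagonal matrix whose diagonal blocks are invertible (they converge to positive multiples of the inner products $\int_{\R^N}\nabla(\partial U/\partial\lambda)\nabla(\partial U/\partial\lambda)$ etc., which are positive because the kernel of the linearized critical equation is exactly spanned by these derivatives). Hence the full linear system relating the vector $\big(\partial_\mu J_\e\big)_\mu$ to the vector $(c_i^j)_{i,j}$ has an invertible coefficient matrix for $\e$ small. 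This immediately yields the equivalence: if $(\bbm[\lambda],\bbm[\xi])$ is a critical point of $J_\e$ then all $\partial_\mu J_\e=0$, forcing all $c_i^j=0$ by invertibility, so $u_\e$ solves \eqref{eq1}; conversely, if $u_\e$ solves \eqref{eq1} then all $c_i^j=0$ and the displayed formula gives $\partial_\mu J_\e=0$ for every $\mu$. I expect the main technical obstacle to be the precise control of $\partial_\mu\phi$ and the verification that the rescaled Gram matrix is uniformly invertible as $\e\to0$; both are routine but delicate estimates of the type carried out in \cite{bamipi,balirey}, to which I would refer rather than reproduce in full.
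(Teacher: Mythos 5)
Your proposal is correct and is essentially the argument the paper relies on: the paper does not prove Lemma \ref{relation} itself but cites \cite{balirey} (see also \cite{bamipi}), where exactly this Lagrange-multiplier scheme is carried out — writing $I_\e'(V_{\e,\hbox{\scriptsize$\bbm[\lambda]$},\hbox{\scriptsize$\bbm[\xi]$}}+\phi)$ as a combination $\sum_{i,j}c_i^j\,{\cal P}_\Omega Z_i^j$, differentiating $J_\e$ by the chain rule, controlling the components of $\partial_\mu\phi$ along ${\cal K}_{\e,\hbox{\scriptsize$\bbm[\lambda]$},\hbox{\scriptsize$\bbm[\xi]$}}$ via the differentiated orthogonality constraint and $\|\phi\|=O(\e)$, and using the uniform invertibility of the rescaled Gram matrix of $\{{\cal P}_\Omega Z_i^j\}$ to conclude that $\nabla J_\e=0$ forces all $c_i^j=0$ (and conversely). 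The only point worth making explicit in a full write-up is the $\e$-scaling bookkeeping for the $\xi$- and $\lambda$-derivatives that makes the cross terms genuinely lower order than the diagonal Gram entries, which is the routine-but-delicate estimate you already flagged.
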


Finally we describe an expansion for $J_\e$ which can be obtained as in
\cite{delfemu2}-\cite{delfemu3}.

\begin{proposition}\label{exp1}
With the change of variables $\lambda_i=(c_N\Lambda_i)^{\frac{1}{N-2}}$ the following asymptotic
expansion holds:
\begin{equation}\label{lonel}
J_\e (\bbm[\lambda],\bbm[\xi] )
= kC_N+\frac{k}{2}\omega_N\e\log\e+k\gamma_N\e+\omega_N\e\Psi_k(\bbm[\Lambda],\bbm[\xi])+o(\e)
\end{equation}
$\cal C^1$-uniformly with respect to  $(\bbm[\lambda],\bbm[\xi])\in{\cal O}_k$. Here:
$$\Psi_k( \bbm[\Lambda],\bbm[\xi])
=\frac12\sum_{i=1}^k\Lambda_i^2H(\xi_i,\xi_i)-\sum_{i<j}a_ia_j\Lambda_i\Lambda_jG(\xi_i,\xi_j)
 -\log (\Lambda_1\cdot\ldots\cdot\Lambda_k),$$
and, setting $U=U_{1,1,0}$, the constants $C_N,\,c_N,\, \omega_N,$ and $\gamma_N$ are given by
$$ C_N=\intr|\nabla U|^2-\frac{1}{2^*}\intr U^{2^*},\quad
 c_N=\frac{1}{2^*}\frac{\intr U^{2^*}}{(\intr U^{2^*-1})^2},\quad
 \omega_N=\frac{1}{2^*}\intr U^{2^*},$$
and
$$
\gamma_N=\frac{1}{(2^*)^2}\intr U^{2^*}-\frac{1}{2^*}\intr U^{2^*}\log U +\frac12\omega_N\log c_N.
$$
\end{proposition}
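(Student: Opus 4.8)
The statement to prove is Proposition~\ref{exp1}, the asymptotic expansion of the reduced functional $J_\e$.

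\medskip

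The plan is to expand $J_\e(\bbm[\lambda],\bbm[\xi])=I_\e(V_{\e,\bbm[\lambda],\bbm[\xi]}+\phi_{\e,\bbm[\lambda],\bbm[\xi]})$ in two stages: first replacing $I_\e$ evaluated at $V+\phi$ by $I_\e(V)$ up to an $o(\e)$ error, then expanding $I_\e(V)$ explicitly in terms of the bubble interactions. For the first stage, I would use the variational characterization of $\phi$: since $V+\phi$ solves \eqref{sati1} and $\phi\in\mathcal K^\perp$ with $\|\phi\|\le C\e$ by Lemma~\ref{reg}, a Taylor expansion of $I_\e$ around $V$ gives $I_\e(V+\phi)=I_\e(V)+I_\e'(V)[\phi]+O(\|I_\e''\|\,\|\phi\|^2)$. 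The linear term $I_\e'(V)[\phi]$ is controlled because $\phi$ is orthogonal to the approximate kernel and $V$ is an approximate solution; the standard estimate $\|I_\e'(V)\|\le C\e$ (which is part of the Lyapunov--Schmidt machinery in \cite{bamipi}) together with $\|\phi\|\le C\e$ yields $I_\e'(V)[\phi]=O(\e^2)=o(\e)$, and likewise the quadratic remainder is $O(\e^2)$. The $\mathcal C^1$-uniformity follows by differentiating these estimates in $(\bbm[\lambda],\bbm[\xi])$, using that $(\bbm[\lambda],\bbm[\xi])\mapsto\phi$ is $\mathcal C^1$ with derivative bounds of the same order.

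\medskip

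For the second stage I would compute $I_\e(V)=\frac12\into|\nabla V|^2-\frac1{2^*-\e}\into|V|^{2^*-\e}$ with $V=\sum_i a_i\mathcal P_\Omega U_{\e,\lambda_i,\xi_i}$. The quadratic term expands via $\into\nabla(\mathcal P_\Omega U_{\e,\lambda_i,\xi_i})\nabla(\mathcal P_\Omega U_{\e,\lambda_j,\xi_j})=\int_{\R^N}U_{\e,\lambda_i,\xi_i}^{2^*-1}\mathcal P_\Omega U_{\e,\lambda_j,\xi_j}$ (integrating by parts and using the defining equation of $\mathcal P_\Omega$); for $i=j$ this produces the term $\int_{\R^N}|\nabla U|^2$ minus a correction involving $H(\xi_i,\xi_i)$ coming from $\mathcal P_\Omega U_{\e,\lambda_i,\xi_i}=U_{\e,\lambda_i,\xi_i}-\alpha_N(\lambda_i\e^{\frac1{N-2}})^{\frac{N-2}2}H(\cdot,\xi_i)+o(\sqrt\e)$, while for $i\ne j$ the cross term yields $\alpha_N^2(\lambda_i\lambda_j)^{\frac{N-2}2}\e\, G(\xi_i,\xi_j)$ to leading order, using that $U_{\e,\lambda_i,\xi_i}$ behaves like $\alpha_N(\lambda_i\e^{\frac1{N-2}})^{\frac{N-2}2}|x-\xi_i|^{2-N}$ away from $\xi_i$ and $\delta^{-N}\int_{\R^N}U_{\e,\lambda_i,\xi_i}^{2^*-1}=\alpha_N(\lambda_i\e^{\frac1{N-2}})^{\frac{N-2}2}(N-2)\sigma_N$. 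The subcritical nonlinear term is the source of the $\e\log\e$ and $\e$ contributions: writing $|V|^{2^*-\e}=|V|^{2^*}|V|^{-\e}$ and $|V|^{-\e}=1-\e\log|V|+O(\e^2\log^2|V|)$, the term $\e\int|V|^{2^*}\log|V|$ produces, after rescaling $x=\xi_i+\lambda_i\e^{\frac1{N-2}}y$, a $\log(\lambda_i\e^{\frac1{N-2}})$ factor times $\int_{\R^N}U^{2^*}$, which is exactly the $\frac{N-2}2\cdot\frac1{N-2}\e\log\e$-type contribution; collecting these gives $\frac k2\omega_N\e\log\e$ plus constants $k\gamma_N\e$ after the change of variables $\lambda_i=(c_N\Lambda_i)^{\frac1{N-2}}$, which is engineered precisely so that the $\Lambda$-independent pieces in front of $\e$ collapse into the single constant $\gamma_N$ and the $\log\Lambda_i$ terms appear with the right sign. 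Here one also uses that $|V|^{2^*}$ is, to leading order, $\sum_i U_{\e,\lambda_i,\xi_i}^{2^*}$ since the bubbles have disjoint concentration and the interaction integrals $\into U_{\e,\lambda_i,\xi_i}^{2^*-1}U_{\e,\lambda_j,\xi_j}$ are $O(\e)$, lower order after the expansion, except where they reproduce the $G(\xi_i,\xi_j)$ terms.

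\medskip

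The main obstacle is the bookkeeping of which interaction integrals contribute at order $\e$ versus which are $o(\e)$, and in particular extracting the Robin-function and Green-function terms with the correct constants $\alpha_N^2(N-2)\sigma_N=\omega_N\cdot(\text{const})$ so that, after the substitution $\lambda_i=(c_N\Lambda_i)^{\frac1{N-2}}$, one arrives precisely at $\omega_N\e\Psi_k(\bbm[\Lambda],\bbm[\xi])$ with $\Psi_k=\frac12\sum_i\Lambda_i^2H(\xi_i,\xi_i)-\sum_{i<j}a_ia_j\Lambda_i\Lambda_jG(\xi_i,\xi_j)-\log(\Lambda_1\cdots\Lambda_k)$. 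A secondary technical point is upgrading all estimates from $\mathcal C^0$ to $\mathcal C^1$-uniformity on $\mathcal O_k$: this requires differentiating the expansions of $\mathcal P_\Omega U_{\e,\lambda_i,\xi_i}$, of $\phi$, and of the nonlinear integral in $(\lambda_i,\xi_i)$, and checking that each differentiated remainder is still $o(\e)$ uniformly — the relevant estimates are by now standard for this family of problems and can be imported from \cite{bamipi}, \cite{delfemu2}, \cite{delfemu3} as indicated.
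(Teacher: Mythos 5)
Your outline follows exactly the standard Lyapunov--Schmidt energy expansion (reduce $I_\e(V+\phi)$ to $I_\e(V)$ using $\|\phi\|=O(\e)$ and orthogonality, then expand the gradient and subcritical terms via the projection estimates, Green/Robin functions, and the $|V|^{-\e}=1-\e\log|V|+\dots$ trick), which is precisely how this expansion is obtained in the references \cite{delfemu2,delfemu3} (and \cite{bamipi}) that the paper itself cites in lieu of a proof. So the proposal is correct in approach and essentially coincides with the paper's (cited) argument; only the constant bookkeeping, which you acknowledge (e.g.\ the stray $\delta^{-N}$ factor should not be there, since $\int_{\R^N}U^{2^*-1}=\alpha_N(N-2)\sigma_N$ already gives the right normalization), would need to be carried out to completion.
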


Thus in order to construct a solution of problem \eqref{eq1} such as the one predicted in
Theorem \ref{th1} it remains to find a critical point of $J_\e$. This will be accomplished in
the next two sections.

We finish this section with a symmetry property of the reduction process.

\begin{lemma}\label{symmetry}
Suppose $\O$ is invariant under the action of an orthogonal transformation $T\in O(N)$. Let
${\cal O}_k^T:=\{(\bbm[\Lambda],\bbm[\xi])\in{\cal O}_k: T\xi_i=\xi_i\ \ \forall i\}$ denote
the fixed point set of $T$ in ${\cal O}_k$. Then a point
$(\bbm[\Lambda],\bbm[\xi])\in{\cal O}_k^T$ is a critical point of $J_\e$ if it is a
critical point of the constrained functional $J_\e|{\cal O}_k^T$.
\end{lemma}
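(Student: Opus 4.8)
The statement is a standard "symmetric criticality" principle adapted to the reduced functional, so the plan is to exhibit an isometric action of the group generated by $T$ on $H^1_0(\Omega)$ under which all the relevant objects — the bubbles, their projections, the spaces $\KK_{\e,\bbm[\lambda],\bbm[\xi]}$, the functional $I_\e$, and the rest term $\phi_{\e,\bbm[\lambda],\bbm[\xi]}$ — transform equivariantly, and then quote (or reprove in two lines) the classical principle of symmetric criticality of Palais. First I would introduce the linear map $T^*:H^1_0(\Omega)\to H^1_0(\Omega)$, $(T^*u)(x)=u(T^{-1}x)$, which is well defined and isometric because $\Omega$ is $T$-invariant and $T\in O(N)$; it preserves the Dirichlet inner product $\langle\cdot,\cdot\rangle$ and leaves $I_\e$ invariant since $\int_\Omega|\nabla(T^*u)|^2=\int_\Omega|\nabla u|^2$ and $\int_\Omega|T^*u|^{2^*-\e}=\int_\Omega|u|^{2^*-\e}$ by the change of variables $y=T^{-1}x$.

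Next I would record the equivariance of the building blocks. Since $U_{\e,\lambda,\xi}(T^{-1}x)=U_{\e,\lambda,T\xi}(x)$ (the bubble depends on $\xi$ only through $|x-\xi|$, and $|T^{-1}x-\xi|=|x-T\xi|$), and since $T^*$ commutes with the projection $\PP_\Omega$ (both sides solve the same Dirichlet problem on the $T$-invariant domain $\Omega$), we get $T^*(\PP_\Omega U_{\e,\lambda_i,\xi_i})=\PP_\Omega U_{\e,\lambda_i,T\xi_i}$. On the fixed-point set ${\cal O}_k^T$ we have $T\xi_i=\xi_i$ for all $i$, hence $T^*V_{\e,\bbm[\lambda],\bbm[\xi]}=V_{\e,\bbm[\lambda],\bbm[\xi]}$. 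Differentiating the relation $U_{\e,\lambda,\xi}(T^{-1}x)=U_{\e,\lambda,T\xi}(x)$ in $\lambda$ and in $\xi$ shows that $T^*$ maps the generators of $\KK_{\e,\bbm[\lambda],\bbm[\xi]}$ into linear combinations of generators of $\KK_{\e,\bbm[\lambda],\bbm[T\xi]}$; for $(\bbm[\lambda],\bbm[\xi])\in{\cal O}_k^T$ this gives $T^*\KK_{\e,\bbm[\lambda],\bbm[\xi]}=\KK_{\e,\bbm[\lambda],\bbm[\xi]}$, and since $T^*$ is an isometry it also fixes the orthogonal complement $\KK^\perp_{\e,\bbm[\lambda],\bbm[\xi]}$.

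Now I would exploit uniqueness in Lemma \ref{reg}. Fix $(\bbm[\lambda],\bbm[\xi])\in{\cal O}_k^T$. Applying $T^*$ to \eqref{sati1} and using that $T^*$ commutes with $\Delta$, with taking powers, and with the above subspace identities, one sees that $T^*\phi_{\e,\bbm[\lambda],\bbm[\xi]}$ again lies in $\KK^\perp_{\e,\bbm[\lambda],\bbm[\xi]}$, satisfies the same equation \eqref{sati1} with the same $V_{\e,\bbm[\lambda],\bbm[\xi]}$, and still obeys the bound \eqref{sati2} (norm-preservation). By the uniqueness part of Lemma \ref{reg}, $T^*\phi_{\e,\bbm[\lambda],\bbm[\xi]}=\phi_{\e,\bbm[\lambda],\bbm[\xi]}$. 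Consequently the function $u=V_{\e,\bbm[\lambda],\bbm[\xi]}+\phi_{\e,\bbm[\lambda],\bbm[\xi]}$ is $T^*$-invariant, and the reduced functional satisfies $J_\e(\bbm[\lambda],T\bbm[\xi])=I_\e(T^*u)=I_\e(u)=J_\e(\bbm[\lambda],\bbm[\xi])$, i.e. $J_\e$ is invariant under the induced (finite) group action on ${\cal O}_k$, with fixed-point set ${\cal O}_k^T$. Since ${\cal O}_k$ is an open subset of the linear space $\R^k\times(\R^N)^k$, the fixed-point set is an affine subspace, and differentiating the invariance $J_\e\circ(\mathrm{id},T)=J_\e$ shows that $\nabla J_\e(\bbm[\lambda],\bbm[\xi])$ is itself fixed by the (transpose of the) action; hence at a point of ${\cal O}_k^T$ the gradient has no component transverse to ${\cal O}_k^T$, so a critical point of $J_\e|{\cal O}_k^T$ is automatically a critical point of $J_\e$. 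I would write this last paragraph either as a direct computation with the group being finite (so averaging/fixed-subspace arguments are elementary) or by invoking Palais' principle of symmetric criticality directly.

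The only genuinely delicate point is the equivariance of the finite-dimensional space $\KK_{\e,\bbm[\lambda],\bbm[\xi]}$: one must check that $T^*$ sends $\PP_\Omega(\partial_{\xi_i^j}U_{\e,\lambda_i,\xi_i})$ to a \emph{linear combination} of the $\PP_\Omega(\partial_{\xi_i^l}U_{\e,\lambda_i,\xi_i})$ (the matrix being essentially the orthogonal matrix $T$ acting on the index $j$) and fixes the $\lambda$-derivatives, so that the span is preserved — this is where $T\in O(N)$ and $T\xi_i=\xi_i$ are both used. Everything else is routine bookkeeping with changes of variables, and the conclusion follows from uniqueness in Lemma \ref{reg} together with symmetric criticality.
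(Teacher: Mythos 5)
Your route is the same as the paper's: equivariance of the projected bubbles and of the spaces $\KK_{\e,\hbox{\scriptsize$\bbm[\lambda]$},\hbox{\scriptsize$\bbm[\xi]$}}$ under $u\mapsto u\circ T^{-1}$, the uniqueness statement of Lemma \ref{reg} to transfer the symmetry to the rest term $\phi$, invariance of $J_\e$, and then an elementary fixed\--subspace argument (the paper compresses this last step into ``the lemma follows immediately''). So the strategy is correct and essentially identical to the paper's proof.

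There is, however, one spot where your write\--up, taken literally, does not deliver what your final step needs. You apply the uniqueness argument only at configurations $(\bbm[\lambda],\bbm[\xi])\in{\cal O}_k^T$; there it yields $T^*\phi_{\e,\hbox{\scriptsize$\bbm[\lambda]$},\hbox{\scriptsize$\bbm[\xi]$}}=\phi_{\e,\hbox{\scriptsize$\bbm[\lambda]$},\hbox{\scriptsize$\bbm[\xi]$}}$ and hence $J_\e(\bbm[\lambda],T\bbm[\xi])=J_\e(\bbm[\lambda],\bbm[\xi])$ only at points where $T\bbm[\xi]=\bbm[\xi]$, i.e.\ tautologically. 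But differentiating the identity $J_\e\circ(\mathrm{id},T)=J_\e$ (equivalently, invoking symmetric criticality) requires that identity to hold on a neighbourhood of the point, hence at configurations which are \emph{not} $T$\--fixed. The repair is exactly what the paper does, and your own preliminary observations already contain it: for an arbitrary $(\bbm[\lambda],\bbm[\xi])\in{\cal O}_k$ one has $T^*\big(\PP_\Omega U_{\e,\lambda_i,\xi_i}\big)=\PP_\Omega U_{\e,\lambda_i,T\xi_i}$ and $T^*\KK_{\e,\hbox{\scriptsize$\bbm[\lambda]$},\hbox{\scriptsize$\bbm[\xi]$}}=\KK_{\e,\hbox{\scriptsize$\bbm[\lambda]$},T\hbox{\scriptsize$\bbm[\xi]$}}$ (and the same for the orthogonal complements, $T^*$ being an isometry), so $T^*\phi_{\e,\hbox{\scriptsize$\bbm[\lambda]$},\hbox{\scriptsize$\bbm[\xi]$}}$ solves \eqref{sati1}--\eqref{sati2} at the configuration $(\bbm[\lambda],T\bbm[\xi])$ and uniqueness gives $\phi_{\e,\hbox{\scriptsize$\bbm[\lambda]$},\hbox{\scriptsize$\bbm[\xi]$}}=\phi_{\e,\hbox{\scriptsize$\bbm[\lambda]$},T\hbox{\scriptsize$\bbm[\xi]$}}\circ T$ for \emph{all} $(\bbm[\lambda],\bbm[\xi])\in{\cal O}_k$, whence $J_\e(\bbm[\lambda],\bbm[\xi])=J_\e(\bbm[\lambda],T\bbm[\xi])$ globally. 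With this global invariance your concluding argument is fine: the action $(\bbm[\lambda],\bbm[\xi])\mapsto(\bbm[\lambda],T\bbm[\xi])$ is orthogonal on $\R^k\times(\R^N)^k$, so at a fixed point the gradient lies in the fixed subspace and vanishes as soon as its tangential part does. (A minor remark: the group generated by $T$ need not be finite, but this is irrelevant, since invariance under $T$ alone suffices and no averaging is needed.)
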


\begin{proof}
We first investigate the symmetry inherited by the function
$\phi_{\e,\hbox{\scriptsize$\bbm[\lambda]$},\hbox{\scriptsize$\bbm[\xi]$}}$ obtained in Lemma \ref{reg}.
Setting $T\bbm[\xi]:=(T\xi_1,\ldots,T\xi_k)$ for $\bbm[\xi]=(\xi_1,\ldots,\xi_k)\in\O^k$,
we claim that
\begin{equation}\label{cell}
\phi_{\e,\hbox{\scriptsize$\bbm[\lambda]$},\hbox{\scriptsize$\bbm[\xi]$}}
 =\phi_{\e,\hbox{\scriptsize$\bbm[\lambda]$},T\hbox{\scriptsize$\bbm[\xi]$}}\circ T\quad
 \forall (\bbm[\lambda],\bbm[\xi] )\in {\cal O}_k.
\end{equation}
Indeed, because of the symmetry of the domain, we see that
$$
{\cal P}_\Omega U_{\e,\lambda_i, \xi_i} = ({\cal P}_\Omega U_{\e,\lambda_i, T\xi_i})\circ T
$$
and
$$
{\cal K}_{\e,\hbox{\scriptsize$\bbm[\lambda]$},\hbox{\scriptsize$\bbm[\xi]$}}
 =\{f\circ T\,|\, f\in {\cal K}_{\e,\hbox{\scriptsize$\bbm[\lambda]$},T\hbox{\scriptsize$\bbm[\xi]$}}\},\qquad
{\cal K}_{\e,\hbox{\scriptsize$\bbm[\lambda]$},\hbox{\scriptsize$\bbm[\xi]$}}^\perp
 =\{f\circ T\,|\, f\in {\cal K}^\perp_{\e,\hbox{\scriptsize$\bbm[\lambda]$},T\hbox{\scriptsize$\bbm[\xi]$}}\}.
$$
Then the function  $\phi_{\e,\hbox{\scriptsize$\bbm[\lambda]$},T\hbox{\scriptsize$\bbm[\xi]$}}\circ T$ belongs to
$ {\cal K}_{\e,\hbox{\scriptsize$\bbm[\lambda]$},\hbox{\scriptsize$\bbm[\xi]$} }^\perp$ and satisfies
\eqref{sati1} and \eqref{sati2}. The uniqueness of the solution $\phi$ implies \eqref{cell}.
Therefore the functional $J_\e$ satisfies
$$
J_\e(\bbm[\lambda],\bbm[ \xi])=J_\e(\bbm[\lambda], T\bbm[\xi]).
$$
The lemma follows immediately.
\end{proof}

\section{A max-min argument: proof of Theorem \ref{th1}}
In this section we will employ the reduction approach to construct the solutions stated in
Theorem \ref{th1}. The results obtained in the previous section imply that our problem reduces
to the study of critical points of the functional $J_\e$ defined in \eqref{jeps}. In what
follows, we assume (A1), (A2), (A3). For $t_1,\ldots,t_k\in (a,b)$, where $(a,b)$ is from (A3),
we set $\bbm[t]=(t_1,\ldots, t_k)$ and
$$
\tilde{J_\e}(\bbm[\lambda],\bbm[t])
 =J_\e(\bbm[\lambda],(t_1,0,\ldots,0),(t_2,0,\ldots,0),\ldots,(t_k,0,\ldots,0)).
$$

\begin{lemma}\label{validity}
If $(\bbm[\lambda], \bbm[t])$ is a critical point of $\tilde{J}_\e$, then
$(\bbm[\lambda], \bbm[\xi])$ is a critical point of $J_\e$, where $\xi_i=(t_i,0,\ldots,0)$.
\end{lemma}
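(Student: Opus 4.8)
The plan is to deduce Lemma~\ref{validity} from the symmetry property of the reduction, namely Lemma~\ref{symmetry}, applied to the reflection $T\in O(N)$ given by $T(x_1,x')=(x_1,-x')$, which by (A2) leaves $\Omega$ invariant. The key observation is that the subspace $\{(t,0,\ldots,0):t\in\R\}$ is precisely the fixed-point set of $T$ acting on $\R^N$, so that a configuration $\bbm[\xi]$ with all $\xi_i=(t_i,0,\ldots,0)$ lies in ${\cal O}_k^T$, and conversely. Thus the constrained functional $J_\e|{\cal O}_k^T$ is, up to the obvious identification of coordinates, nothing but $\tilde J_\e$.

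First I would make this identification precise: the map $(\bbm[\lambda],\bbm[t])\mapsto(\bbm[\lambda],(t_1,0,\ldots,0),\ldots,(t_k,0,\ldots,0))$ is a diffeomorphism from its (open) domain of definition in $(\delta,\delta^{-1})^k\times(a,b)^k$ onto an open subset of ${\cal O}_k^T$, and under this diffeomorphism $\tilde J_\e$ corresponds exactly to $J_\e|{\cal O}_k^T$. Hence if $(\bbm[\lambda],\bbm[t])$ is a critical point of $\tilde J_\e$, then the corresponding point $(\bbm[\lambda],\bbm[\xi])$ is a critical point of $J_\e|{\cal O}_k^T$.

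Next I would invoke Lemma~\ref{symmetry}: since $(\bbm[\lambda],\bbm[\xi])\in{\cal O}_k^T$ is a critical point of the constrained functional $J_\e|{\cal O}_k^T$, it is in fact a critical point of the full functional $J_\e$ on ${\cal O}_k$. This is exactly the assertion of the lemma. One should only check that the hypotheses of Lemma~\ref{symmetry} apply, i.e. that $\Omega$ is invariant under $T$, which is guaranteed by (A2), and that $J_\e$ is of class ${\cal C}^1$, which follows from Lemma~\ref{reg} together with Lemma~\ref{relation}.

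There is no serious obstacle here; the only point requiring a little care is the verification that ${\cal O}_k^T$, with $T$ the reflection from (A2), really is a \emph{manifold} (in fact an open subset of an affine subspace) on which $\tilde J_\e$ and $J_\e$ agree, so that ``critical point of the constrained functional'' has its naive meaning in the $(\bbm[\lambda],\bbm[t])$ coordinates and the chain rule applies without subtlety. Since $\Fix(T)=\R\times\{0\}$ is linear, ${\cal O}_k^T$ is the intersection of ${\cal O}_k$ with the linear subspace $\R^k\times(\R\times\{0\})^k$, and this identification is immediate; the conclusion then follows at once from Lemma~\ref{symmetry}.
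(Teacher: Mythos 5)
Your argument is correct and is exactly the paper's route: the authors also prove Lemma~\ref{validity} as an immediate consequence of Lemma~\ref{symmetry} applied to the reflection from (A2), whose fixed-point configurations are precisely those with all $\xi_i$ on the $x_1$-axis, so that $\tilde J_\e$ is the constrained functional $J_\e|{\cal O}_k^T$ in the obvious coordinates. Your added care about ${\cal O}_k^T$ being an open subset of a linear subspace is fine (only a small slip: the ${\cal C}^1$-regularity of $J_\e$ comes from the ${\cal C}^1$-dependence of $\phi_{\e,\hbox{\scriptsize$\bbm[\lambda]$},\hbox{\scriptsize$\bbm[\xi]$}}$ stated in Lemma~\ref{reg}, not from Lemma~\ref{relation}).
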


\begin{proof} This is an immediate consequence of Lemma \ref{symmetry}.
\end{proof}

Let us now fix $k=4$ and set $$a_1=a_3=1,\quad a_2=a_4=-1.$$  So we are looking for solutions
to problem \eqref{eq1} with 2 positive and two negative spikes which are aligned along the
$x_1$-direction with alternating signs. From Lemma \ref{validity}, we need to find a critical
point of the function $\tilde{J}_\e(\bbm[\lambda], \bbm[t])$. The expansion obtained in
Proposition \ref{exp1} implies that our problem reduces to the study of critical points of a
functional which is a small ${\cal C}^1$-perturbation of
$$
\tilde{\Psi}( \bbm[\Lambda],\bbm[t])
 =\frac12\sum_{i=1}^4\Lambda_i^2h(t_i, t_i)-\sum_{i<j}(-1)^{i+j}\Lambda_i\Lambda_jg(t_i,t_j)
  -\log (\Lambda_1\cdot\Lambda_2\cdot\Lambda_3\cdot\Lambda_4),
$$
where
$\bbm[\Lambda]
 =(\Lambda_1,\Lambda_2,\Lambda_3,\Lambda_4)\in(0,+\infty)^4$, $\bbm[t]
 =(t_1,t_2,t_3,t_4)\in(a,b)^4$
and the functions  $g$ and $h$ are the restrictions of $G$ and $H$ to the $x_1$-axis defined
in the introduction. We recall that the function $\tilde{\Psi}$ is well defined in the set
$$
{\cal M} :=
 \Big\{(\bbm[\Lambda],\bbm[t])\,\Big|\,\Lambda_i>0,\,t_i\in(a,b)\;
 \forall i=1,2,3,4 \;\;\&\;\; t_1<t_2<t_3<t_4\Big\}.
$$

Observe that  by assumption \eqref{prop2} the function $g(\cdot, s)=g(s, \cdot)$ is decreasing
along the interval $(s, b)$ and increasing along $(a,s)$. Therefore
\begin{equation}\label{decre1}
g(t_1, t_4)\leq g(t_1, t_3)\leq g(t_1, t_2), \qquad g(t_1, t_4)\leq g(t_2, t_4)\leq g(t_3, t_4)
 \quad \forall (\bbm[\Lambda],\bbm[t])\in{\cal M} .
\end{equation}
Analogously,
\begin{equation}\label{decre2}
g(t_2, t_4),\, g(t_1, t_3)\leq g(t_2, t_3)\quad \forall (\bbm[\Lambda],\bbm[t])\in{\cal M}.
\end{equation}
In this section we apply a max-min argument to characterize a topologically nontrivial
critical value of the function $\tilde\Psi$ in the set ${\cal M}$. More precisely we will
construct sets $\mathcal D$, $K$, $K_0\subset{\cal M}$ satisfying  the following properties:
\begin{enumerate}
\item [(P1)] $\mathcal D$ is an open set,  $K_0$ and $K$ are
compact sets,  $K$ is connected and
\begin{equation*}
K_0\subset K\subset \mathcal D\subset {\overline {\mathcal D}}\subset {\cal M};
\end{equation*}
\item[(P2)] If  we define the complete metric space ${\mathcal F}$ by
$$
{\mathcal F}=\Big\{\eta:K\to {\mathcal D}\,\Big|\,
 \eta\hbox{ continuous},\;\eta(\bbm[\Lambda],\bbm[t])=(\bbm[\Lambda],\bbm[t])\;
 \forall (\bbm[\Lambda],\bbm[t])\in K_0\Big\},
$$
then
\begin{equation}\label{mima}
\tilde{ \Psi}^*
:=\sup_{\eta\in{\mathcal F}}\min_{(\hbox{\scriptsize$\bbm[\Lambda]$},\hbox{\scriptsize$\bbm[t]$)}\in K}
  \tilde{\Psi}(\eta(\bbm[\Lambda],\bbm[t]))
<\min_{(\hbox{\scriptsize$\bbm[\Lambda]$},\hbox{\scriptsize$\bbm[t]$})\in K_0}\tilde{\Psi}(\bbm[\Lambda],\bbm[t]).
\end{equation}
\item[(P3)] For every $(\bbm[\Lambda],\bbm[t])\in\partial\mathcal D$ such that
$\tilde{\Psi}(\bbm[\Lambda],\bbm[t])={\Psi}^*$, we have that $\partial \mathcal D$ is smooth at
$(\bbm[\Lambda],\bbm[t])$ and there exists a vector
$\tau_{\hbox{\scriptsize$\bbm[\Lambda]$},\hbox{\scriptsize$\bbm[t]$}}$ tangent to $\partial\mathcal D$ at
$(\bbm[\Lambda],\bbm[t])$ so that
$\tau_{\hbox{\scriptsize$\bbm[\Lambda]$},\hbox{\scriptsize$\bbm[t]$}} \cdot
 \nabla \tilde\Psi(\bbm[\Lambda],\bbm[t])\neq 0$.
\end{enumerate}

\bigskip

Under these assumptions a critical point $(\bbm[\Lambda],\bbm[t])\in{\mathcal D}$ of
$\tilde\Psi$ with $\tilde{\Psi}(\bbm[\Lambda],\bbm[t])=\tilde{\Psi}^*$ exists, as a standard
deformation argument involving the gradient flow of $\tilde{\Psi}$ shows. Moreover, since
properties (P2)-(P3) continue to hold also for a function which is ${\cal C}^1$-close to
$\tilde{\Psi}$, then such a critical point will \textit{survive} small
${\cal C}^1$-perturbations.

\medskip

\subsection{Definition of $\bbm[{\cal D}]$} We define
$$
{\mathcal D}
 =\bigg\{(\bbm[\Lambda],\bbm[t])\in{\cal M}\;\bigg|\,
  \Phi(\bbm[\Lambda],\bbm[t])
    :=\frac12\sum_{i=1}^4\Lambda_i^2h(t_i,t_i)+\sum_{i<j}\Lambda_i\Lambda_jg(t_i,t_j)
 -\log (\Lambda_1\Lambda_2\Lambda_3\Lambda_4) <M \bigg\}
$$
where $M>0$ is a sufficiently large number to be specified later. It is easy to check  that
the function $\Phi$ satisfies
\begin{equation}\label{coercivi}
\Phi(\bbm[\Lambda],\bbm[t])\to +\infty\hbox{ as }(\bbm[\Lambda],\bbm[t])\to \partial{\cal M}.
\end{equation}
Indeed, for any $\Lambda>0$ and $t\in (a,b)$ we have
$$
\frac{\Lambda^2}{2} h(t,t)-\log\Lambda
 \geq\frac{\Lambda^2}{4}h(t,t)+|\log\Lambda|+\Big(\frac{\Lambda^2}{4}H_0-2\log^+ \Lambda \Big)
$$
where $\log^+x=\max\{\log x, 0\}$  denotes the positive  part of the logarithm,  and $H_0>0$
is the minimum value of the Robin's function in $\Omega$ (see \eqref{coercive}). Taking into
account that the function $\frac{H_0}{4}x^2 -2\log x$ minimizes for $x=2H_0^{-1/2}$, we deduce
\begin{equation}\label{coecoe}
\frac{\Lambda^2}{2} h(t,t)-\log\Lambda
 \geq \frac{\Lambda^2}{4}h(t,t)+|\log \Lambda|-2\log^+\frac{2}{\sqrt{H_0}}\quad
 \forall \Lambda>0,\, t\in (a,b).
\end{equation}
Hence for any  $(\bbm[\Lambda], \bbm[t])\in {\cal M}$ we get
\begin{equation}\label{tele}
\Phi(\bbm[\Lambda],\bbm[t])
 \geq \frac14\sum_{i=1}^4\Lambda_i^2h(t_i,t_i)+\sum_{i=1}^4|\log\Lambda_i|
       +\sum_{i<j}\Lambda_i\Lambda_jg(t_i,t_j)-8\log^+\frac{2}{\sqrt{H_0}}.
\end{equation}
\eqref{coercivi} follows by using the properties of $h$ and $g$ (see  Appendix A). In
particular \eqref{coercivi} implies that ${\mathcal D}$ is compactly contained in ${\cal M}$.

\medskip

\subsection{Definition of $\bbm[K]$, $\bbm[K_0]$, and proof of (P1)}

In this subsection we define the sets $K,\,K_0$ for which properties (P1)-(P2) hold. We
consider the configurations $(\bbm[\Lambda], \bbm[t])$ such that $\Lambda_2=\Lambda_3$,
i.e. configurations of the form
\begin{equation}\label{firstform}
(\bbm[\Lambda](\bbm[\mu]),\bbm[t])
 =\left(\frac{\mu_1}{\sqrt{\mu}},\sqrt{\mu},\sqrt{\mu},\frac{\mu_4}{\sqrt{\mu}},
   t_1,t_2,t_3,t_4\right),
\end{equation}
where $\bbm[t]=(t_1,t_2,t_3,t_4)\in (a,b)^4$, and
${\bbm[\mu]}=(\mu_1,\mu, \mu_4)\in (0,+\infty)^3$.
Next we consider the open set
\begin{equation}\label{utilde}
\bigg\{(\bbm[\mu],\bbm[t])\in (0,+\infty)^3\times (a,b)^4\,\bigg|\,
  (\bbm[\Lambda](\bbm[\mu]),\bbm[t])\in{\cal M},\;
  \Phi(\bbm[\Lambda](\bbm[\mu]),\bbm[t])< \frac{M}{2}\bigg\} .
\end{equation}
Since we do not know whether \eqref{utilde} is connected or not, so we will define $U$ as a
conveniently chosen connected component. Let $t_0\in(a,b)$ be fixed and choose $r_0>0$
sufficiently small such that
\begin{equation}\label{req1}
[t_0-4r_0, t_0+4r_0]\subset(a,b)
\end{equation}
and
\begin{equation}\label{req2}
\frac12h(t,t)+\frac12h(s,s)-g(t,s)\leq 0\;\;\forall t,s\in[t_0-4r_0,t_0+4r_0],\, t\neq s.
\end{equation}
Setting  $\bbm[\mu]_0=(1,1,1)$, $\bbm[t]_0=(t_0,t_0+r_0, t_0+2r_0, t_0+3r_0)$, then
$(\bbm[\Lambda](\bbm[\mu]_0), \bbm[t]_0)\in{\cal M}$ and, consequently,
$(\bbm[\mu]_0, \bbm[t]_0)$ belongs to \eqref{utilde} provided that $M$ is
sufficiently large. Now we are ready to define $U$, $K$ and $K_0$:
$$
U:=\mbox{ the connected component of }\eqref{utilde}\mbox{ containing }(\bbm[\mu]_0,\bbm[t]_0),
$$
$$
K=\left\{(\bbm[\Lambda](\bbm[\mu]),\bbm[t])\in{\cal M}:\ (\bbm[\mu],\bbm[t])\in\overline{U}
   \right\},
$$
$$
K_0=\left\{ (\bbm[\Lambda](\bbm[\mu]), \bbm[t])\in {\cal M}:\
     (\bbm[\mu], \bbm[t]) \in \partial U \right\}.
$$
Let us observe that, according to \eqref{coercivi}, the following inclusion holds:
\begin{equation}\label{inclu}
K_0\subset \left\{(\bbm[\Lambda],\bbm[t])\in K\,\bigg|\,
           \Phi(\bbm[\Lambda],\bbm[t])=\frac{M}{2}\right\}.
\end{equation}
$K$ is clearly isomorphic to $\overline{U}$ by the obvious isomorphism, and
$K_0 \approx \partial U$. In particular, $K$ and $K_0$ are compact sets and $K$ is connected.
Moreover we have $K_0\subset K\subset {\cal D}$.

Since $\Lambda_2=\Lambda_3$ by the definition of $K$, using \eqref{decre1} we obtain
\begin{equation}\label{kappa0}
-\sum_{i<j} (-i)^{i+j}\Lambda_i\Lambda_jg(t_i,t_j)
 \geq \Lambda_2\Lambda_3g(t_2,t_3)+\Lambda_1\Lambda_4g(t_1,t_4)\quad
 \forall (\bbm[\Lambda],\bbm[t])\in K.
\end{equation}
Roughly speaking, the configurations in $K$ have the crucial property that the negative
interaction terms associated to the couples of points with the same sign are dominated by the
positive interplay between the couples of points having opposite signs.

\subsection{An upper and a lower estimate for $\bbm[\tilde{\Psi}^*]$}
Let $\eta\in {\mathcal F}$, so $\eta:K\to {\mathcal D}$ is a continuous function such that
$\eta(\bbm[\Lambda],\bbm[t])=(\bbm[\Lambda],\bbm[t])$ for any $(\bbm[\Lambda],\bbm[t])\in K_0$.
Then we can compose the following maps
$$
(0,+\infty)^{3}\times (a,b)^4 \supset \overline{U} \longleftrightarrow K
 \stackrel{\eta}{\longrightarrow} \eta(K)
 \subset {\cal D}\stackrel{ {\cal H}}{\longrightarrow} (0,+\infty)^{3}\times(a,b)^4
$$
where
${\cal H}=({\cal H}_1,{\cal H}_2,\ldots,{\cal H}_7):{\cal D}\to (0,+\infty)^{3}\times(a,b)^4$
is defined by
$$
{\cal H}_1(\bbm[\Lambda],\bbm[t])=\Lambda_1\Lambda_2,\;\;\;
{\cal H}_2(\bbm[\Lambda],\bbm[t])=\Lambda_2\Lambda_3,\;\;\;
{\cal H}_3(\bbm[\Lambda],\bbm[t])=\Lambda_3\Lambda_4,
$$
$$
{\cal H}_4(\bbm[\Lambda],\bbm[t])=t_1,\;\;
{\cal H}_5(\bbm[\Lambda],\bbm[t])=t_2,\;\;
{\cal H}_6(\bbm[\Lambda],\bbm[t])=t_3,\;\;
{\cal H}_7(\bbm[\Lambda],\bbm[t])=t_4.
$$
We set
$$
T:\overline{U}\to  (0,+\infty)^{3}\times (a,b)^4
$$
the resulting composition. Clearly $T$ is a continuous map. We claim that $T=id$ on
$\partial U$. Indeed, if $(\bbm[\mu],\bbm[t])\in\partial U$, then by construction
$(\bbm[\Lambda](\bbm[\mu]), \bbm[t])\in K_0$; consequently
$\eta(\bbm[\Lambda](\bbm[\mu]), \bbm[t])=(\bbm[\Lambda](\bbm[\mu]), \bbm[t])$, by which,
using the definitions \eqref{firstform},
$$
{\cal H}_1(\bbm[\Lambda](\bbm[\mu]), \bbm[t])=\frac{\mu_1}{\sqrt{\mu}}\sqrt{\mu}=\mu_1,
$$
$$
{\cal H}_2(\bbm[\Lambda](\bbm[\mu]), \bbm[t])=\sqrt{\mu}\sqrt{\mu}=\mu
$$
$$
{\cal H}_3(\bbm[\Lambda](\bbm[\mu]), \bbm[t])=\sqrt{\mu}\frac{\mu_4}{\sqrt{\mu}}=\mu_4 .
$$
This proves that $T=id$ on $\partial U$. The theory of the topological degree assures that
$$
\deg(T, U,(\bbm[\mu]_0,\bbm[t]_0))
 = \deg (id, U, (\bbm[\mu]_0,\bbm[t]_0))=1.
$$
Then there exists $(\bbm[\mu]^\eta,\bbm[s]^\eta)\in U$ such that
$T(\bbm[\mu]^\eta,\bbm[s]^\eta)=(\bbm[\mu]_0,\bbm[t]_0)$, i.e., if we set
$(\bbm[\Lambda]^\eta, \bbm[t]^\eta):=\eta(\bbm[\Lambda](\bbm[\mu]^\eta), \bbm[s]^\eta)\in\eta(K)$,
\begin{equation}\label{chat3}
\Lambda_1^\eta\Lambda_2^\eta=\Lambda_2^\eta\Lambda_3^\eta=\Lambda_3^\eta\Lambda_4^\eta=1.
\end{equation}
\begin{equation}\label{chat4}
\bbm[t]^\eta=\bbm[t]_0.
\end{equation}
Using \eqref{req2}, and taking into account that
$\Lambda_1^\eta=\Lambda_3^\eta,$ $ \Lambda_2^\eta=\Lambda_4^\eta$ by \eqref{chat3}, we obtain
\begin{equation}\label{har1}
\frac12(\Lambda_1^\eta)^2h(t_1^0,t_1^0)+\frac12(\Lambda_3^\eta)^2h(t_3^0,t_3^0)
 -\Lambda_1^\eta\Lambda_3^\eta g(t_1^0,t_3^0)\leq 0,
\end{equation}
\begin{equation}\label{har2}
\frac12(\Lambda_2^\eta)^2h(t_2^0,t_2^0)+\frac12(\Lambda_4^\eta)^2h(t_4^0,t_4^0)
 -\Lambda_2^\eta\Lambda_4^\eta g(t_2^0,t_4^0)\leq 0.
\end{equation}
Furthermore by \eqref{chat3} we also deduce
\begin{equation}\label{har3}
\Lambda_1^\eta\Lambda_4^\eta=\frac{1}{\Lambda_2^\eta}\frac{1}{\Lambda_3^\eta}
 =\frac{1}{\Lambda_2^\eta\Lambda_3^\eta}=1,\quad
\Lambda_1^\eta\Lambda_2^\eta\Lambda_3^\eta\Lambda_4^\eta
 =(\Lambda_1^\eta\Lambda_2^\eta)(\Lambda_3^\eta\Lambda_4^\eta)=1.
\end{equation}
Combining \eqref{chat4}-\eqref{har1}-\eqref{har2}-\eqref{har3} with the definition of
$\tilde{\Psi}$ we get
\begin{equation*}
\tilde{\Psi}(\bbm[\Lambda]^\eta,\bbm[t]^\eta)
 \leq g(t_1^0, t_2^0)+g(t_2^0, t_3^0)+g(t_3^0, t_4^0)+g(t_1^0, t_4^0).
\end{equation*}
Then we can estimate
\begin{equation*}
\min_{(\hbox{\scriptsize$\bbm[\Lambda]$},\hbox{\scriptsize$\bbm[t]$})\in K}
 \tilde{\Psi}(\eta(\bbm[\Lambda],\bbm[t]))
\leq \tilde{\Psi}( \bbm[\Lambda]^\eta,\bbm[t]^\eta)
\leq g(t_1^0, t_2^0)+g(t_2^0, t_3^0)+g(t_3^0, t_4^0)+g(t_1^0, t_4^0).
\end{equation*}
By taking the supremum for all the maps $\eta\in {\cal F}$, we conclude
\begin{equation}\label{rox}
\tilde{\Psi}^*
 =\sup_{\eta\in {\cal F}}\min_{(\hbox{\scriptsize$\bbm[\Lambda]$},\hbox{\scriptsize$\bbm[t]$})\in K}
  \tilde{\Psi}(\eta(\bbm[\Lambda],\bbm[t]))
 \leq g(t_1^0, t_2^0)+g(t_2^0, t_3^0)+g(t_3^0, t_4^0)+g(t_1^0, t_4^0).
\end{equation}
On the other hand, by taking $\eta=id$ and using \eqref{coecoe} and \eqref{kappa0},
\begin{equation}\label{roxx}
\tilde{\Psi}^*
 \geq \min_{(\hbox{\scriptsize$\bbm[\Lambda]$},\hbox{\scriptsize$\bbm[t]$})\in K}
      \tilde{\Psi}(\bbm[\Lambda],\bbm[t])
 \geq -8\log^+\frac{2}{\sqrt{H_0}}.
\end{equation}

\subsection{Proof of (P2)}
Let us first recall that the upper estimate for $\tilde{\Psi}^*$ obtained in \eqref{rox} holds
for any $M$ sufficiently large. Then, by using \eqref{inclu}, the max-min inequality (P2) will
follow once we have proved that
\begin{equation}\label{rocco}
\min_{(\hbox{\scriptsize$\bbm[\Lambda]$},\hbox{\scriptsize$\bbm[t]$})\in K,\,
      \Phi(\hbox{\scriptsize$\bbm[\Lambda]$},\hbox{\scriptsize$\bbm[t]$})
=\frac{M}{2}}\tilde{\Psi}(\bbm[\Lambda],\bbm[t])
\to +\infty \quad \hbox{ as }M\to +\infty.
\end{equation}
To this aim, it will be convenient to provide a lower bound for the functional $\tilde{\Psi}$
over $K$. Combining \eqref{coecoe} and \eqref{kappa0}  we get
\begin{equation}\label{kappa}
\tilde{\Psi}(\bbm[\Lambda],\bbm[t])
 \geq\sum_{i=1}^4\frac{\Lambda_i^2}{4}h(t_i,t_i)+\sum_{i=1}^4|\log\Lambda_i|
      + \Lambda_2\Lambda_3g(t_2,t_3)+\Lambda_1\Lambda_4g(t_1,t_4) -8\log^+\frac{2}{\sqrt{H_0}}
\end{equation}
for any $(\bbm[\Lambda],\bbm[t])\in K$.

Now we are going to prove \eqref{rocco}. Indeed, let
$(\bbm[\Lambda]_n,\bbm[t]_n)
 =(\Lambda_1^n, \Lambda_2^n,\Lambda_3^n,\Lambda_4^n, t_1^n,t_2^n,t_3^n,t_4^n)\in K$
be such that
\begin{equation}\label{cover}
\Phi(\bbm[\Lambda]_n,\bbm[t]_n) \to +\infty.
\end{equation}

The definition of $\Phi$ implies that, up to a subsequence, the following four cases cover
all the possibilities for which \eqref{cover} may occur.
\begin{enumerate}
\item[{\bf(1)}] \textit{there exists ${\hat{\imath}}$ such that $\Lambda_{\hat{\imath}}^n\to 0$.}
\item[{\bf(2)}] \textit{there exists ${\hat{\imath}}$ such that
 $\Lambda_{\hat{\imath}}^n\to +\infty$.}
\item[{\bf(3)}] \textit{$t_1^n\to a$ or $t_4^n\to b$.}
\item[{\bf(4)}] \textit{for every $i$ the numbers $\Lambda_i^n$ are bounded from above and
 below by positive constants and there exist ${\hat{\imath}}< {\hat{\jmath}}$ such that
 $t^n_{\hat{\jmath}}-t^n_{\hat{\imath}}\to 0$.}
\end{enumerate}
\bigskip

If case (1), (2) or (3) holds, then by \eqref{kappa}, recalling \eqref{coercive}, we get
$\tilde{\Psi}(\bbm[\Lambda]_n,\bbm[t]_n)\to +\infty$, as required.

Assume that case (4) occurs. The definition of $\tilde{\Psi}$ combined with \eqref{kappa0}
implies  $$\tilde{\Psi}(\bbm[\Lambda]_n,\bbm[t]_n)\geq c\, g(t_2^n,t_3^n) -C$$ for suitable
positive constants $c, C$. Therefore, if ${\hat{\imath}}\le2$ and ${\hat{\jmath}}\ge3,$ we
get $t_3^n-t_2^n\to0$, hence $\tilde{\Psi}(\bbm[\Lambda]_n,\bbm[t]_n)\to +\infty$.

It remains to consider the case when, up to a subsequence
$$
t_3^n-t_2^n\geq a,\quad t^n_2-t^n_1\to 0,
$$
or
$$
t_3^n-t_2^n\geq a,\quad t^n_4-t^n_3\to 0
$$
for some $a>0$. Then we deduce $t_j^n-t_i^n\geq a$ for every  $i\leq 2<3\leq j$. Since the
Green's function $g$ is smooth on the compact sets disjoint from the diagonal, by the
definition of $\tilde{\Psi}$ we get
$$
\begin{aligned}
\tilde{\Psi}(\bbm[\Lambda]_n,\bbm[t]_n)\geq c'g(t_1^n,t_2^n)+c'g(t_3^n,t_4^n)-C'
\end{aligned}
$$
for some $c',\,C'>0$ and then we conclude
$$
\tilde{\Psi}(\bbm[\Lambda]_n,\bbm[t]_n)\to +\infty.
$$

\bigskip

\subsection{Proof of (P3)}
We shall prove that  (P3) holds provided that $M$ is sufficiently large. First we recall that
the upper and the lower estimates for $\Psi^*$ obtained in \eqref{rox} and \eqref{roxx}
holds for any $M$ sufficiently large. Then we proceed by contradiction: assume that there exist
$(\bbm[\Lambda]_{n}, \bbm[t]_n)
 =(\Lambda_{1}^n,\Lambda_2^n,\Lambda_3^n,\Lambda_4^n, t_1^n, t_2^n, t_3^n, t_4^n)
 \in\mathcal{M}$
and a vector $(\beta_1^n, \beta_2^n)\neq (0,0)$  such that:
$$ \Phi (\bbm[\Lambda]_{n}, \bbm[t]_n) = n, $$
$$ \tilde{\Psi}(\bbm[\Lambda]_{n}, \bbm[t]_n)=O(1), $$
$$ \beta_1^n \nabla \tilde{\Psi} (\bbm[\Lambda]_{n}, \bbm[t]_n)
    + \beta_2^n\nabla \Phi(\bbm[\Lambda]_{n}, \bbm[t]_n)=0.$$

The last expression means read as $\nabla \tilde{\Psi} (\bbm[\Lambda]_{n}, \bbm[t]_n) $ and
$\nabla \Phi(\bbm[\Lambda]_{n},\bbm[t]_n)$ are linearly dependent. Observe that, according to
the Lagrange Theorem, this contradicts the nondegeneracy of $\nabla \tilde{\Psi}$ on the
tangent space at the level $\Psi^*$.

Without loss of generality we may assume
\begin{equation}\label{winny}
(\beta_1^n)^2+(\beta_2^n)^2=1\hbox{ and } \beta_1^n+\beta_2^n\geq 0.
\end{equation}
Considering $ \Phi (\bbm[\Lambda]_{n}, \bbm[t]_n)+ \tilde{\Psi}(\bbm[\Lambda]_{n}, \bbm[t]_n)$
and $\Phi (\bbm[\Lambda]_{n}, \bbm[t]_n)- \tilde{\Psi}(\bbm[\Lambda]_{n}, \bbm[t]_n)$ we obtain,
respectively,
\begin{equation}\label{sist+}
\sum_{i=1}^4(\Lambda_i^n)^2 h(t_i^n,t_i^n)
 +2\sum_{i<j,\, (-1)^{i+j}=-1}\Lambda_i^n\Lambda_j^ng(t_i^n, t_j^n)
 -2\log (\Lambda_1^n\Lambda_2^n\Lambda_3^n\Lambda_4^n)
=n+O(1)
\end{equation}
and
\begin{equation}\label{sist-}
2\Lambda_1^n\Lambda_3^ng(t_1^n, t_3^n)+2\Lambda_2^n\Lambda_4^ng(t_2^n, t_4^n)=n+O(1).
\end{equation}
The identities
$\beta_1^n\frac{\partial\tilde{\Psi}}{\partial t_i}(\bbm[\Lambda]_{n}, \bbm[t]_n)
 + \beta_2^n\frac{\partial{\Phi}}{\partial t_i}(\bbm[\Lambda]_{n}, \bbm[t]_n)=0$
imply
\begin{equation}\label{acca}
(\beta_1^n+\beta_2^n)(\Lambda_1^n)^2\frac{\partial h}{\partial t}(t_i^n, t_i^n)
 -\sum_{j=1\atop j\neq i}^4((-1)^{i+j}\beta_1^n-\beta_2^n)\Lambda_i^n\Lambda_j^n
  \frac{\partial g}{\partial t}(t_i^n, t_j^n)
=0\quad \forall i=1,2,3,4.
\end{equation}
Moreover, from
$\beta_1^n\frac{\partial\tilde{\Psi}}{\partial \Lambda_i}(\bbm[\Lambda]_{n}, \bbm[t]_n)
 + \beta_2^n\frac{\partial{\Phi}}{\partial \Lambda_i}   (\bbm[\Lambda]_{n}, \bbm[t]_n) = 0$
we obtain the following four identities:
\begin{equation}\label{motiv}
(\beta_1^n+\beta_2^n)(\Lambda_i^n)^2 h(t_i^n,t_i^n)
 -\Lambda_i^n\sum_{j,j\neq i}((-1)^{i+j}\beta_1^n-\beta_2^n)\Lambda_j^ng(t_i^n, t_j^n)
=\beta_1^n+\beta_2^n\quad \forall i=1,2,3,4,
\end{equation}
by which, considering the sum in $i=1,2,3,4$,
\begin{equation}\label{sist3}
(\beta_1^n+\beta_2^n)\sum_{i=1}^4(\Lambda_i^n)^2 h(t_i^n,t_i^n)
 -2\sum_{i<j}((-1)^{i+j}\beta_1^n-\beta_2^n)\Lambda_i^n\Lambda_j^ng(t_i^n, t_j^n)
=4(\beta_1^n+\beta_2^n)
\end{equation}
which is equivalent to
\begin{equation}\label{sist3eq}
\beta_1^n\big(\tilde\Psi(\bbm[\Lambda]_{n}, \bbm[t]_n)
 +\log(\Lambda^{n}_1\Lambda_2^n\Lambda_3^n\Lambda_4^n)\big)
 +\beta_2^n\big(n+\log(\Lambda^{n}_1\Lambda_2^n\Lambda_3^n\Lambda_4^n)\big)
=2(\beta_1^n+\beta_2^n).
\end{equation}

Observe that by \eqref{sist+} we have
$\log(\Lambda^{n}_1\Lambda_2^n\Lambda_3^n\Lambda_4^n)\geq -\frac{n}{2}+O(1)$, while, by
\eqref{tele}, $(\Lambda_i^n)^2\leq \frac{4}{H_0}n+O(1)$ and hence
$\log(\Lambda^{n}_1\Lambda_2^n\Lambda_3^n\Lambda_4^n)\leq 2\log n+O(1)$. Then we easily obtain
$$
n+\log(\Lambda^{n}_1\Lambda_2^n\Lambda_3^n\Lambda_4^n)\to +\infty \quad\hbox{ and }\quad
\frac{\log(\Lambda^{n}_1\Lambda_2^n\Lambda_3^n\Lambda_4^n)}
     {n+\log(\Lambda^{n}_1\Lambda_2^n\Lambda_3^n\Lambda_4^n)}
\leq o(1).
$$
Multiplying \eqref{sist3eq} by $\beta_1^n$ we get
$$
\beta_1^n\beta_2^n
=2\beta_1^n\frac{\beta_1^n+\beta_2^n}{n+\log(\Lambda^{n}_1\Lambda_2^n\Lambda_3^n\Lambda_4^n)}
 -(\beta_1^n)^2\frac{O(1)+\log(\Lambda^{n}_1\Lambda_2^n\Lambda_3^n\Lambda_4^n)}
    {n+\log(\Lambda^{n}_1\Lambda_2^n\Lambda_3^n\Lambda_4^n)}
\geq o(1).
$$
Combining this with \eqref{winny} we have
\begin{equation}\label{pooh}
\beta_1^n\geq o(1),\quad \beta_2^n\geq o(1), \quad 2\geq\beta_1^n+\beta_2^n\geq 1+o(1).
\end{equation}
Using \eqref{pooh}, we can divide the identities \eqref{motiv} by $\beta_1^n+\beta_2^n$. Then
we obtain:
\begin{equation}\label{geppy1}
(\Lambda_1^n)^2h(t_1^n,t_1^n)+\Lambda_1^n \Lambda_2^ng(t_1^n, t_2^n)
 -\frac{\beta_1^n-\beta_2^n}{\beta_1^n+\beta_2^n}\Lambda_1^n\Lambda_3^ng(t_1^n, t_3^n)
 +\Lambda_1^n \Lambda_4^ng(t_1^n, t_4^n)
= 1,
\end{equation}
\begin{equation}\label{geppy2}
(\Lambda_2^n)^2h(t_2^n,t_2^n)+\Lambda_2^n\Lambda_1^n g(t_1^n, t_2^n)
 +\Lambda_2^n\Lambda_3^ng(t_2^n,t_3^n)
 -\frac{\beta_1^n-\beta_2^n}{\beta_1^n+\beta_2^n}\Lambda_2^n\Lambda_4^ng(t_2^n, t_4^n)
= 1,
\end{equation}
\begin{equation}\label{geppy3}
(\Lambda_3^n)^2h( t_3^n,t_3^n)
 -\frac{\beta_1^n-\beta_2^n}{\beta_1^n+\beta_2^n}\Lambda_3^n\Lambda_1^n g(t_1^n, t_3^n)
 +\Lambda_3^n\Lambda_2^ng(t_2^n, t_3^n)+\Lambda_3^n\Lambda_4^ng(t_3^n, t_4^n)
= 1,
\end{equation}
\begin{equation}\label{geppy4}
(\Lambda_4^n)^2h( t_4^n,t_4^n)+\Lambda_1^n\Lambda_4^n g(t_1^n, t_4^n)
 -\frac{\beta_1^n-\beta_2^n}{\beta_1^n+\beta_2^n}\Lambda_2^n\Lambda_4^ng(t_2^n, t_4^n)
 +\Lambda_3^n\Lambda_4^ng(t_3^n, t_4^n)
= 1.
\end{equation}

Up to a subsequence, we may assume
$$
t_i^n\to \bar t_i\in [a,b]\quad \forall i=1,2,3,4.
$$
In what follows  at many steps of the arguments we will pass to a subsequence, without
further notice. We will often use the symbol $c$ or $C$ for denoting different positive
constants independent on $n$. The value of $c$, $C$ is allowed to vary from line to line (and
also in the same formula). Motivated by \eqref{sist3}, we distinguish five cases which will
all lead to a contradiction.

\bigskip

{\bf Case 1. Avoiding blowing up of parameters I} \textit{Suppose the following holds:
\begin{equation}\label{molly}
(\beta_1^n-\beta_2^n)\Lambda_1^n\Lambda_3^ng(t_1^n, t_3^n)\to +\infty, \quad
(\beta_1^n-\beta_2^n)\Lambda_2^n\Lambda_4^ng(t_2^n, t_4^n)\to +\infty.
\end{equation}
}

\medskip

Then, in particular $\beta_1^n>\beta_2^n$ and, dividing \eqref{geppy1} by
$\frac{\beta_1^n-\beta_2^n}{\beta_1^n+\beta_2^n}\Lambda_1^n\Lambda_3^ng(t_1^n, t_3^n)$, we get

\begin{equation}\label{eli1}
\frac{\Lambda_2^n}{\Lambda_3^n}\cdot\frac{\beta_1^n+\beta_2^n}{\beta_1^n-\beta_2^n}
\leq \frac{\Lambda_2^n}{\Lambda_3^n}\cdot
     \frac{\beta_1^n+\beta_2^n}{\beta_1^n-\beta_2^n}\frac{g(t_1^n, t_2^n)}{g(t_1^n, t_3^n)}
\leq 1+o(1).
\end{equation}
where  the first inequality follows by \eqref{decre1}. Analogously, dividing \eqref{geppy4}
by $\frac{\beta_1^n-\beta_2^n}{\beta_1^n+\beta_2^n}\Lambda_2^n\Lambda_4^ng(t_2^n, t_4^n)$, and
using again \eqref{decre1}, we have
\begin{equation}\label{eli2}
\frac{\Lambda_3^n}{\Lambda_2^n}\cdot\frac{\beta_1^n+\beta_2^n}{\beta_1^n-\beta_2^n}
\leq \frac{\Lambda_3^n}{\Lambda_2^n}\cdot
     \frac{\beta_1^n+\beta_2^n}{\beta_1^n-\beta_2^n}\frac{g(t_3^n, t_4^n)}{g(t_2^n, t_4^n)}
\leq 1+o(1).
\end{equation}
\eqref{eli1} and \eqref{eli2} give
\begin{equation*}
\frac{\beta_1^n+\beta_2^n}{\beta_1^n-\beta_2^n}\leq 1+o(1)
\end{equation*}
which implies, using \eqref{pooh},
\begin{equation}\label{anto1}
\beta_2^n=o(1),\quad\beta_1^n=1+o(1).
\end{equation}
Inserting this into \eqref{eli1}-\eqref{eli2}  we achieve
\begin{equation}\label{anto2}
\Lambda_2^n=\Lambda_3^n(1+o(1)),
\end{equation}
and
\begin{equation}\label{anto3}
g(t_1^n, t_2^n)=g(t_1^n, t_3^n)(1+o(1)),\quad g(t_3^n, t_4^n)=g(t_2^n, t_4^n)(1+o(1)).
\end{equation}
Using \eqref{anto1}-\eqref{anto3} and \eqref{molly}, the equations
\eqref{geppy1}-\eqref{geppy4} lead to:
\begin{equation}\label{kat1}
(\Lambda_1^n)^2h( t_1^n,t_1^n)+\Lambda_1^n \Lambda_4^ng(t_1^n, t_4^n)
=o\big(\Lambda_1^n\Lambda_3^ng(t_1^n, t_3^n)\big),
\end{equation}
\begin{equation}\label{kat2}
(\Lambda_2^n)^2h( t_2^n,t_2^n)+\Lambda_2^n\Lambda_1^ng(t_1^n, t_2^n)
 +\Lambda_2^n\Lambda_3^ng(t_2^n, t_3^n)
=(1+o(1))\Lambda_2^n\Lambda_4^ng(t_2^n, t_4^n),
\end{equation}
\begin{equation}\label{kat3}
(\Lambda_3^n)^2h( t_3^n,t_3^n)+\Lambda_2^n\Lambda_3^ng(t_2^n, t_3^n)
 +\Lambda_3^n\Lambda_4^ng(t_3^n, t_4^n)
=(1+o(1))\Lambda_1^n\Lambda_3^ng(t_1^n, t_3^n),
\end{equation}
\begin{equation}\label{kat4}
(\Lambda_4^n)^2h( t_4^n,t_4^n)+\Lambda_1^n\Lambda_4^n g(t_1^n, t_4^n)
=o\big(\Lambda_2^n\Lambda_4^ng(t_2^n, t_4^n)\big).
\end{equation}
Combining \eqref{kat2}-\eqref{kat3} with \eqref{anto2}-\eqref{anto3} we obtain
$$
\begin{aligned}
\Lambda_2^n\Lambda_1^ng(t_1^n, t_2^n)
&\leq (1+o(1))\Lambda_2^n\Lambda_4^ng(t_2^n, t_4^n)=(1+o(1))\Lambda_3^n\Lambda_4^ng(t_3^n, t_4^n)\\
&\leq (1+o(1))\Lambda_1^n\Lambda_3^ng(t_1^n, t_3^n)= (1+o(1)) \Lambda_2^n\Lambda_1^ng(t_1^n, t_2^n).
\end{aligned}
$$
Then all the above inequalities are actually equalities, by which \eqref{kat2}-\eqref{kat3}
can be rewritten as
\begin{equation*}\label{kat22}
(\Lambda_2^n)^2h( t_2^n,t_2^n)+\Lambda_2^n\Lambda_3^ng(t_2^n, t_3^n)
=o\big(\Lambda_2^n\Lambda_4^ng(t_2^n, t_4^n)\big),
\end{equation*}
\begin{equation*}\label{kat33}
(\Lambda_3^n)^2h( t_3^n,t_3^n)+\Lambda_2^n\Lambda_3^ng(t_2^n, t_3^n)
=o\big(\Lambda_1^n\Lambda_3^ng(t_1^n, t_3^n)\big).
\end{equation*}
Now \eqref{decre2} applies and gives together with \eqref{anto2}
$$
o(\Lambda_4^n)=\Lambda_3^n=(1+o(1)) \Lambda_2^n=o(\Lambda_1^n).
$$
Substituting in \eqref{kat1} and \eqref{kat4} yields
\begin{equation}\label{flam1}
h( t_1^n,t_1^n),\;g(t_1^n, t_4^n)=o(g(t_1^n, t_3^n)), \quad
h(t_4^n,t_4^n),\,g(t_1^n, t_4^n)=o(g(t_2^n, t_4^n)).
\end{equation}
We will derive a contradiction from \eqref{anto3} and \eqref{flam1}. Indeed, by
$h( t_1^n,t_1^n)=o(g(t_1^n, t_3^n))$ we deduce  $g(t_1^n, t_3^n)\to +\infty$, hence
$|t_1^n-t_3^n|\to 0$. Analogously by $h(t_4^n,t_4^n)=o(g(t_2^n, t_4^n))$ we get
$|t_2^n-t_4^n|\to 0$. Therefore we are in the following situation
$$
t_1^n,\,t_2^n, \, t_3^n,\, t_4^n\to \bar t\in[a,b]\quad \forall i=1,2,3,4.
$$
Now, if $\bar t=a$, then Lemma \ref{robin} yields
$$
h(t_1^n, t_4^n)=\frac{1+o(1)}{\sigma_N(N-2)(t_4^n+t_1^n-2a)^{N-2}}
 \leq \frac{1+o(1)}{\sigma_N(N-2)(2t_1-2a)^{N-2}}=(1+o(1))h(t_1^n,t_1^n)
$$
and therefore, using \eqref{flam1},
$$
\frac{|t_1^n-t_3^n|^{N-2}}{|t_1^n-t_4^n|^{N-2}}
=\frac{g(t_1^n, t_4^n)+h(t_1^n, t_4^n)}{g(t_1^n, t_3^n)+h(t_1^n, t_3^n)}
\leq \frac{g(t_1^n, t_4^n)+h(t_1^n, t_4^n)}{g(t_1^n, t_3^n)}
=o(1)
$$
and then $t_3^n-t_1^n=o(t_4^n-t_1^n)$. On the other hand, using again Lemma \ref{robin},
$$
h(t_1^n, t_4^n) = \frac{1+o(1)}{\sigma_N(N-2)(t_4^n+t_1^n-2a)^{N-2}}
 \leq \frac{1+o(1)}{\sigma_N(N-2)(t_4-a)^{N-2}} = 2^{N-2}(1+o(1))h(t_4^n,t_4^n).
$$
Now \eqref{flam1} leads to
$$
\frac{|t_2^n-t_4^n|^{N-2}}{|t_1^n-t_4^n|^{N-2}}
= \frac{g(t_1^n, t_4^n)+h(t_1^n, t_4^n)}{h(t_2^n, t_4^n)+h(t_2^n, t_4^n)}
\leq \frac{g(t_1^n, t_4^n)+h(t_1^n, t_4^n)}{g(t_2^n, t_4^n)}
= o(1),
$$
hence $t_4^n-t_2^n=o(t_4^n-t_1^n)$. Combining this with $t_3^n-t_1^n=o(t_4^n-t_1^n)$ we obtain a
contradiction. An analogous argument applies to the case $\bar t=b$. Finally assume
$\bar t\in (a,b)$. Then $h(t_i^n, t_j^n)=O(1)$ for every $i,\,j$, therefore \eqref{flam1}
yields
$$
\frac{|t_i^n-t_j^n|^{N-2}}{|t_1^n-t_4^n|^{N-2}}
= \frac{g(t_1^n, t_4^n)+O(1)}{g(t_i^n, t_j^n)+O(1)}=o(1)\quad
\hbox{ for } (i,j)=(1,3), \,(2,4).
$$
This gives $t_3^n-t_1^n=o(t_4^n-t_1^n)$ and  $t_4^n-t_2^n=o(t_4^n-t_1^n)$ respectively, and the
contradiction arises as above.

\bigskip

{\bf Case 2: Avoiding blowing up of parameters II.}
\textit{Suppose the following holds:
\begin{equation}\label{molla}
(\beta_1^n-\beta_2^n)\Lambda_1^n\Lambda_3^ng(t_1^n, t_3^n)\to +\infty, \quad
 (\beta_1^n-\beta_2^n)\Lambda_2^n\Lambda_4^ng(t_2^n, t_4^n) \leq C.
\end{equation}
The analogous holds by interchanging the roles of the couples of indexes $(1,3)$ and $(2,4)$.}

\bigskip

Then in particular there holds $\beta_1^n>\beta_2^n$. Using \eqref{geppy2}, \eqref{geppy4} and
the second inequality in \eqref{molla} we obtain
\begin{equation}\label{gin11}
(\Lambda_2^n)^2h(t_2^n, t_2^n),\,\Lambda_1^n \Lambda_2^ng(t_1^n, t_2^n),\,
 \Lambda_2^n\Lambda_3^ng(t_2^n, t_3^n)
\leq C,
\end{equation}
\begin{equation}\label{gin44}
(\Lambda_4^n)^2h(t_4^n, t_4^n),\,\Lambda_1^n\Lambda_4^n g(t_1^n, t_4^n),\,
 \Lambda_3^n\Lambda_4^ng(t_3^n, t_4^n)
\leq C.
\end{equation}
By inserting \eqref{gin11}-\eqref{gin44} into \eqref{geppy1} and \eqref{geppy3}, we obtain
\begin{equation}\label{gola}
(\Lambda_1^n)^2h(t_1^n, t_1^n),\,(\Lambda_3^n)^2h(t_3^n, t_3^n)
= \frac{\beta_1^n-\beta_2^n}{\beta_1^n+\beta_2^n}\Lambda_1^n\Lambda_3^ng(t_1^n, t_3^n)+O(1)
\to +\infty.
\end{equation}
We distinguish three cases. First assume that there exists $i_0\in\{1,2,3,4\}$ such that
\begin{equation}\label{firstcase}
t_i^n\to a\quad \forall 1\leq i\leq i_0,\qquad  |t_i^n-a|\geq c \quad\forall i>i_0.
\end{equation}
By adding \eqref{acca} for $i=1,\ldots,i_0$ we obtain
\begin{equation}\label{acce1}
(\beta_1^n+\beta_2^n)\sum_{i=1}^{i_0}(\Lambda_i^n)^2\frac{\partial h}{\partial t}(t_i^n, t_i^n)
 -\sum_{i=1}^{i_0}\sum_{j=1\atop j\neq i}^4((-1)^{i+j}\beta_1^n-\beta_2^n)\Lambda_i^n\Lambda_j^n
  \frac{\partial g}{\partial t}(t_i^n, t_j^n)
= 0.
\end{equation}
Now $|t_i^n-t_j^n|\geq c$ for $i\leq i_0$ and $j>i_0$ imply
\begin{equation}\label{go}\frac{\partial g}{\partial t}(t_i^n, t_j^n)=O(1)\quad
 \forall i\leq i_0,\forall j>i_0.
\end{equation}
Considering the sum for $i,j\leq i_0$ we observe that by Lemma \ref{robin}
$$
\frac{\partial g}{\partial t}(t_i^n, t_j^n)+\frac{\partial g}{\partial t}(t_j^n, t_i^n)
= -\frac{\partial h}{\partial t}(t_i^n, t_j^n)-\frac{\partial h}{\partial t}(t_j^n, t_i^n)
=\frac{2+o(1)}{\sigma_N(t_i^n+t_j^n-2a)^{N-1}}\quad \forall i,j\leq i_0,\, i\neq j.
$$
Therefore, using again Lemma \ref{robin}, the identity of \eqref{acce1} becomes
\begin{equation}\label{accaacca1}
\sum_{i=1}^{i_0}\frac{(\Lambda_i^n)^2(1+o(1))}{(2t_i^n-2a)^{N-1}}
 +2\sum_{i,j=1\atop i<j}^{i_0}\frac{(-1)^{i+j}\beta_1^n-\beta_2^n}{\beta_1^n+\beta_2^n}\cdot
  \frac{\Lambda_i^n\Lambda_j^n(1+o(1))}{(t_j^n+t_i^n-2a)^{N-1}}
= \sum_{i\leq i_0<j}O( \Lambda_i^n\Lambda_j^n).
\end{equation}
In order to estimate the last sum, we will prove that
\begin{equation}\label{anx}
\Lambda_i^n\Lambda_j^n=o\bigg(\frac{(\Lambda_i^n)^2}{(2t_i^n-2a)^{N-1}}\bigg)+O(1)\quad
 \forall i\leq i_0<j.
\end{equation}
Indeed, if $i\leq i_0<j$ and $(i,j)\neq (1,3)$, then, either $j=2$ or $j=4$, and, as a
consequence of \eqref{gin11}-\eqref{gin44}, $\Lambda_2^n,\, \Lambda_4^n=O(1)$; therefore
$\Lambda_i^n\Lambda_j^n \leq \frac12(\Lambda_i^n)^2+\frac12 (\Lambda_j^n)^2
 \leq \frac12(\Lambda_i^n)^2+C$ and \eqref{anx} holds true.
On the other hand, using \eqref{gola},
$$
\Lambda_1^n\Lambda_3^n
=(1+o(1))(\Lambda_1^n)^2\bigg(\frac{h(t_1^n, t_1^n)}{h(t_3^n, t_3^n)}\bigg)^{1/2}
\leq (1+o(1))(\Lambda_1^n)^2\bigg(\frac{h(t_1^n, t_1^n)}{H_0}\bigg)^{1/2}
$$
and \eqref{anx} follows by using Lemma \ref{robin}.

Next, in order to estimate the second sum in \eqref{accaacca1}, we claim that
\begin{equation}\label{anxanx}
\frac{\Lambda_i^n\Lambda_j^n}{(t_j^n+t_i^n-2a)^{N-1}}
=o\bigg(\frac{(\Lambda_i^n)^2}{(2t_i^n-2a)^{N-1}}+\frac{(\Lambda_j^n)^2}{(2t_j^n-2a)^{N-1}}\bigg)\
\quad \hbox{ if }i,j\leq i_0, (-1)^{i+j}=-1 .
\end{equation}
Indeed, take, for instance, the couple $(i,j)=(1,2)$; the other cases are analogous. The
claim is obvious if $\Lambda_2^n=o(\Lambda_1^n)$ or $\Lambda_1^n=o(\Lambda_2^n).$ Otherwise
$c\leq \frac{\Lambda_2^n}{\Lambda_1^n}\leq C$ and then, using \eqref{gin11} and \eqref{gola},
$\frac{h(t_2^n, t_2^n)}{h(t_1^n, t_1^n)}=o(1),$ by which, applying Lemma \ref{robin},
$t_1^n-a=o(t_2^n-a) $. This in turn implies $t_1^n-a=o(t_1^n+t_2^n-2a),$ and \eqref{anxanx}
follows.

Therefore, recalling that $\beta_1^n>\beta_2^n$, \eqref{accaacca1} becomes
\begin{equation*}
\sum_{i=1}^{i_0}\frac{(\Lambda_i^n)^2(1+o(1))}{(2t_i^n-2a)^{N-1}}\leq C.
\end{equation*}
Taking into account that
$\frac{(\Lambda_1^n)^2}{(2t_1^n-2a)^{N-1}}\geq c\frac{(\Lambda_1^n)^2h(t_1^n, t_1^n)}{2t_1^n-2a}
 \to +\infty$ by Lemma \ref{robin} and \eqref{gola}, the contradiction follows.

An analogous argument can be applied when there exists $i_0\in\{1,2,3,4\}$ such that
\begin{equation}\label{secondcase}
t_i^n\to b\quad \forall i_0\leq i\leq 4,\qquad  |t_i^n-b|\geq c \quad\forall i<i_0.
\end{equation}
So we may assume
\begin{equation}\label{thirdcase}
t_i^n\to\bar t_i \in (a,b)\quad \forall i=1,2,3,4.
\end{equation}
According to the assumption \eqref{prop1} we have either
$\frac{\partial h}{\partial t}(t_1^n, t_1^n)\leq0$ or
$\frac{\partial h}{\partial t}(t_3^n, t_3^n)\geq0$. Assume, for instance,
$$
\frac{\partial h}{\partial t}(t_1^n, t_1^n)\leq0
$$
(the case $\frac{\partial h}{\partial t}(t_3^n, t_3^n)\geq0$ can be treated analogously).
We set $\{1,2,3,4\}=I\cup J$ where
$$
I=\{i\,:\, |t_i^n-t_1^n|=o(|t_1^n-t_3^n|)\},\quad J=\{i\,;\, |t_i^n-t_1^n|\geq c(|t_1^n-t_3^n|)\}.
$$
It is obvious that $I=\{1\}$ or $I=\{1,2\}$. Then, adding \eqref{acca} for $i\in I$ we get
\begin{equation}\label{aggot}
\sum_{i\in I}\sum_{j=1\atop j\neq i}^4((-1)^{i+j}\beta_1^n-\beta_2^n)\Lambda_i^n\Lambda_j^n
 \frac{\partial g}{\partial t}(t_i^n, t_j^n)
\leq C(\Lambda_2^n)^2.
\end{equation}
Observe that
$$
\frac{\partial g}{\partial t}(t_1^n, t_2^n)+\frac{\partial g}{\partial t}(t_2^n, t_1^n)
= -\frac{\partial h}{\partial t}(t_1^n, t_2^n)-\frac{\partial h}{\partial t}(t_2^n, t_1^n)
= O(1)
$$
and $\Lambda_2^n\leq C$, $\Lambda_1^n\Lambda_2^n\leq C$, by \eqref{gin11}; therefore
\eqref{aggot} becomes
\begin{equation}\label{aggoss}
\sum_{i\in I}\sum_{ j\in J}((-1)^{i+j}\beta_1^n-\beta_2^n)
 \Lambda_i^n\Lambda_j^n\frac{\partial g}{\partial t}(t_i^n, t_j^n)
\leq C.
\end{equation}
According to the assumption \eqref{prop2} we have
$\frac{\partial g}{\partial t}(t, s)>0$ if $t<s$. Since all the sequences $t_i^n$ lie in a
compact subset of $\Omega$, Lemma \ref{robin} implies
\begin{equation}\label{forse}
c\,\frac{g(t_i^n, t_j^n)}{|t_{i}^n-t_{j}^n|}
\leq  \frac{\partial g}{\partial t}(t_i^n, t_j^n)
\leq C\frac{g(t_i^n, t_j^n)}{|t_{i}^n-t_{j}^n|} \quad \forall i,j=1,2,3,4, \;i< j.
\end{equation}
On the other hand, if $i\in I$ and $j\in J$, then  $i<j$ and $|t_i^n-t_j^n|\geq c|t_3^n-t_1^n|$
by the definition of $I, J$; therefore combining  \eqref{aggoss} and \eqref{forse}
we arrive at
\begin{equation}\label{aggo}
(\beta_1^n-\beta_2^n)\Lambda_1^n\Lambda_3^ng(t_1^n, t_3^n)
\leq C\sum_{(i,j)\in I\times J\atop (i,j)\neq (1,3)}|(-1)^{i+j}\beta_1^n-\beta_2^n|
 \Lambda_i^n\Lambda_j^ng(t_i^n, t_j^n)+C .
\end{equation}
This contradicts \eqref{molla}-\eqref{gin11}-\eqref{gin44}.

\bigskip

{\bf Case 3: Avoiding the boundary.} \textit{Suppose the following holds:
$|\beta_1^n-\beta_2^n|\Lambda_1^n\Lambda_3^ng(t_1^n, t_3^n)=O(1),\;
|\beta^n_1-\beta_2^n|\Lambda_2^n\Lambda_4^ng(t_2^n, t_4^n)=O(1)$,
$I_a:=\{i=1,2,3,4\,|\, \bar t_i=a\}\neq \emptyset $ and
\begin{equation}\label{izero}
(\beta_1^n+\beta_2^n)\sum_{(i,j)\in I_a}(\Lambda_{i}^n)^2 h(t_{i}^n, t_{i}^n)
 +2\sum_{i,j\in I_a \atop i<j }|(-1)^{i+j}\beta_1^n-\beta_2^n|\Lambda_{i}^n\Lambda_{j}^ng(t_{i}^n,t_{j}^n)
\geq c.
\end{equation}
Replacing $I_a$ with $I_b$ can be treated analogously. }

\bigskip

First of all we observe that \eqref{sist-} implies
\begin{equation}\label{pooh1}
\beta_1^n-\beta_2^n=o(1).
\end{equation}
Recalling that $(\beta_1^n)^2+(\beta_2^n)^2=1$ it follows that
\begin{equation}\label{pooh2}
\beta_1^n+\beta_2^n=\sqrt 2+o(1).
\end{equation}
Using \eqref{geppy1}-\eqref{geppy4} we obtain
\begin{equation}\label{bou1}
(\Lambda_i^n)^2h(t_i^n, t_i^n)\leq C\quad \forall i=1,2,3,4,
\end{equation}
hence $\Lambda_i^n\leq C$ for all $i=1,2,3,4$, and
\begin{equation}\label{bou}
\Lambda_1^n\Lambda_2^n g(t_1^n, t_2^n),\; \Lambda_1^n\Lambda_4^n g(t_1^n, t_4^n), \;
\Lambda_2^n\Lambda_3^n g(t_2^n, t_3^n),\; \Lambda_3^n\Lambda_4^n g(t_3^n, t_4^n)\leq C.
\end{equation}
Now we  multiply \eqref{acca} by $t_i^n-a$ and add for $i\in I_a$
\begin{equation}\label{acca11}
\sum_{i\in I_a}(\Lambda_i^n)^2\frac{\partial h}{\partial t}(t_i^n, t_i^n)(t_i^n-a)
 - \sum_{i\in I_a}\sum_{j=1\atop j\neq i}^4\frac{(-1)^{i+j}\beta_1^n-\beta_2^n}{\beta_1^n+\beta_2^n}
   \Lambda_i^n\Lambda_j^n\frac{\partial g}{\partial t}(t_i^n, t_j^n)(t_i^n-a)
= 0.
\end{equation}
We  estimate the terms in each  sum in order to obtain a contradiction. Lemma \ref{robin}
implies
\begin{equation*}
\frac{\partial h}{\partial t}(t_i^n, t_i^n)
=-\frac{1+o(1)}{\sigma_N(2t_i^n-2a)^{N-1}}
=-(N-2)(1+o(1))\frac{h(t_i^n, t_i^n)}{2(t_i^n-a)},
\quad\forall i\in I_a.
\end{equation*}
By the definition of $I_a$, there holds $|t_i^n-t_j^n|\geq c$ for $i\in I_a$ and $j\not\in I_a$.
This implies
\begin{equation}\label{goe}
\frac{\partial g}{\partial t}(t_i^n, t_j^n)=O(1)\quad
\forall i\in I_a,\,\forall j\not\in I_a.
\end{equation}
We split the second sum in \eqref{acca11} in two terms: those with $j\in I_a$ and those with
$j\not\in I_a$. We use  again Lemma \ref{robin} and, considering the sum for $i,j\in I_a$, we
observe that
$$
\begin{aligned}\
\frac{\partial g}{\partial t}(t_i^n, t_j^n)(t_i^n-a)
& +\frac{\partial g}{\partial t}(t_j^n, t_i^n)(t_j^n-a)
 =-\frac{1}{\sigma_N|t_j^n-t_i^n|^{N-2}}+\frac{1+o(1)}{\sigma_N(t_j^n-2a+t_i^n)^{N-2}}\\
& =-(N-2)g(t_i^n,t_j^n)+\frac{o(1)}{\sigma_N(t_j^n-2a+t_i^n)^{N-2}} \qquad
 \forall i,j\in I_a,\, i\neq j.
\end{aligned}
$$
On the other hand, it is straightforward to prove that the function
$\frac{\exp y}{|t-a|^{N-2}}$ is convex  for $t\geq a$, $ y\in \R$. Therefore
$$
\begin{aligned}
\frac{\Lambda_i^n\Lambda_j^n}{(t_j^n-2a+t_i^n)^{N-2}}
&= \frac{\exp\big(\frac{\log (\Lambda_i^n)^2}{2}+\frac{\log (\Lambda_j^n)^2}{2}\big)}
        {2^{N-2}(\frac{t_i^n+t_j^n}{2}-a)^{N-2}}
 \leq \frac{(\Lambda_i^n)^2}{2(2t_i^n-2a)^{N-2}}+\frac{(\Lambda_j^n)^2}{2(2t_j^n-2a)^{N-2}}\\
&\leq C((\Lambda_i^n)^2h(t_i^n, t_i^n)+(\Lambda_j^n)^2h(t_j^n, t_j^n))
 \leq C
 \quad \forall i,j\in I_a,\, i\neq j.
\end{aligned}
$$
Therefore \eqref{acca11} becomes
\begin{equation}\label{accaacca11}
(\beta_1^n+\beta_2^n)\sum_{i\in I_a}(\Lambda_i^n)^2h(t_i^n,t_i^n)
 -2\sum_{(i,j)\in I_a\atop i<j}((-1)^{i+j}\beta_1^n-\beta_2^n)\Lambda_i^n\Lambda_j^ng(t_i^n,t_j^n)
= o(1).
\end{equation}
If $I_a=\{1\}$ or $I_a=\{1,2\},$ then the left hand sides of \eqref{izero} and
\eqref{accaacca11} coincide, in contradiction with the right hand sides. If $I_a=\{1,2,3,4\}$,
then the contradiction arises by comparing \eqref{accaacca11} with \eqref{sist3} because of
\eqref{pooh2}. So it remains to consider the case $I_a=\{1,2,3\}$. We sum the identities
\eqref{motiv} for $i=1,2,3$ and subtract \eqref{accaacca11} and we obtain
$$
(\beta_1^n+\beta_2^n)\Lambda_1^n\Lambda_4^ng(t_1^n,t_4^n)
 -(\beta_1^n-\beta_2^n)\Lambda_2^n \Lambda_4^ng(t_2^n,t_4^n)
 +(\beta_1^n+\beta_2^n)\Lambda_3^n\Lambda_4^ng(t_3^n,t_4^n)
= 3(\beta_1^n+\beta_2^n)+o(1).
$$
However, combining  this with  \eqref{motiv} for $i=4$ gives
$$
(\beta_1^n+\beta_2^n)(\Lambda_4^n)^2h(t_4^n, t_4^n)+2(\beta_1^n+\beta_2^n)=o(1)
$$
and the contradiction arises because of \eqref{pooh2}.

\medskip

{\bf Case 4: Avoiding collisions.} \textit{Suppose the following holds:
$|\beta_1^n-\beta_2^n|\Lambda_1^n\Lambda_3^ng(t_1^n, t_3^n)=O(1),\;
|\beta^n_1-\beta_2^n|\Lambda_2^n\Lambda_4^ng(t_2^n, t_4^n)=O(1)$
and there exists  $i_0\neq j_0$ such that $\bar t_{i_0}=\bar t_{j_0}\in (a,b)$ and
$$
|(-1)^{i_0+j_0}\beta_1^n-\beta_2^n|\Lambda_{i_0}^n\Lambda_{j_0}^ng(t_{i_0}^n, t_{j_0}^n)\geq c .
$$ }

\bigskip

As in the previous case we immediately get \eqref{pooh1}--\eqref{bou}. Hence, in particular,
$\Lambda_i^n\leq C$ for any $i=1,2,3,4$. Set $\bar t=\bar t_{i_0}=\bar t_{j_0}\in (a,b)$ and
$I=\{i=1,2,3,4\,|\, \bar t_i=\bar t\}$. We split $I=I_1\cup I_2$ where
$$
I_1=\left\{i\in I\,\bigg|\,\exists j\in I,\, j\neq i\hbox{ s.t. }\;
  |(-1)^{i+j}\beta_1^n-\beta_2^n|\frac{\Lambda_{i}^n\Lambda_{j}^n}{|t_i^n-t_j^n|^{N-1}}
   \to +\infty \right\},
$$
and
$$
I_2=\left\{i\in I\,\bigg|\,\forall j\in I, \, j\neq i:\;
  |(-1)^{i+j}\beta_1^n-\beta_2^n|\frac{\Lambda_{i}^n\Lambda_{j}^n}{|t_i^n-t_j^n|^{N-1}}\leq C\right\}.
$$
Since the sequences $t_i^n$ lie in a compact subset of $\Omega$ for any $i\in I$, Lemma
\ref{robin} implies
$$
\frac{\partial g}{\partial t}(t_i^n, t_j^n)
=-\frac{t_{i}^n-t_{j}^n}{\sigma_N|t_{i}^n-t_{j}^n|^{N}}+O(1)\quad
 \forall i\in I,\,\forall j=1,2,3,4, \;i\neq j.
$$
Moreover, observe that
$\frac{1}{|t_{i_0}^n-t_{j_0}^n|^{N-1}}
 \geq \sigma_N(N-2)\frac{g(t_{i_0}^n, t_{j_0}^n)}{|t_{i_0}^n- t_{j_0}^n|}.$
Therefore, according to the assumptions,  $i_0, j_0\in I_1$. For any $i\in I_1$ we consider
\eqref{acca} and obtain
\begin{equation}\label{tres}
\sum_{j\in I_1, \,j\neq i}((-1)^{i+j}\beta_1^n-\beta_2^n)\Lambda_{i}^n\Lambda_{j}^n
 \frac{t_{i}^n-t_{j}^n}{|t_{i}^n-t_{j}^n|^{N}}
= O(1) \quad \forall i\in I_1.
\end{equation}
Using \eqref{tres} for $i_0$, we immediately get the existence of a third index
$j\in I_1$, $j\neq i_0, j_0$. Therefore $I_1$ has actually at least three elements.
Assume $I_1= \{1,2,3,4\}$. We look at \eqref{tres} for $i = 1$:
$$
(\beta_1^n+\beta_2^n)\frac{\Lambda_{1}^n\Lambda_{2}^n}{|t_{1}^n-t_{2}^n|^{N-1}}
 +(\beta_1^n+\beta_2^n)\frac{\Lambda_{1}^n\Lambda_{4}^n}{|t_{1}^n-t_{4}^n|^{N-1}}
= (\beta_1^n-\beta_2^n)\frac{\Lambda_{1}^n\Lambda_{3}^n}{|t_{1}^n-t_{3}^n|^{N-1}}+O(1)
\to +\infty
$$
which yields $\beta_1^n>\beta_2^n$. Dividing the identity by
$(\beta_1^n-\beta_2^n)\frac{\Lambda_{1}^n\Lambda_{3}^n}{|t_{1}^n-t_{3}^n|^{N-1}}$,
and using $|t_{1}^n-t_{2}^n|<|t_{1}^n-t_{3}^n|$, we get
$$
\frac{\beta_1^n+\beta_2^n}{\beta_1^n-\beta_2^n}\leq \frac{\Lambda_3^n}{\Lambda_2^n}(1+o(1)).
$$
Next we consider \eqref{tres} for $i = 4$ and proceed analogously, using now that
$|t_{3}^n-t_{4}^n|<|t_{2}^n-t_{4}^n|$. This leads to:
$\frac{\beta_1^n+\beta_2^n}{\beta_1^n-\beta_2^n}\leq \frac{\Lambda_2^n}{\Lambda_3^n}(1+o(1))$,
and so
$$
\frac{\beta_1^n+\beta_2^n}{\beta_1^n-\beta_2^n}\leq 1+o(1)
$$
in contradiction with \eqref{pooh1}-\eqref{pooh2}.

It remains to consider the case when $I_1$ has exactly three elements. If $I_1=\{1,2,4\}$,
then \eqref{tres} for $i=2$ gives
$$
(\beta_1^n+\beta_2^n)\frac{\Lambda_{1}^n\Lambda_{2}^n}{|t_{1}^n-t_{2}^n|^{N-1}}
= -(\beta_1^n-\beta_2^n)\frac{\Lambda_{2}^n\Lambda_{4}^n}{|t_{1}^n-t_{4}^n|^{N-1}}+O(1)
\to +\infty,
$$
which is absurd if $\beta_1^n\geq \beta_2^n$. On the other hand, by \eqref{tres} for  $i=4$
$$
(\beta_1^n+\beta_2^n)\frac{\Lambda_{1}^n\Lambda_{4}^n}{|t_{1}^n-t_{4}^n|^{N-1}}
= (\beta_1^n-\beta_2^n)\frac{\Lambda_{2}^n\Lambda_{4}^n}{|t_{2}^n-t_{4}^n|^{N-1}}+O(1)
\to +\infty
$$
which gives the contradiction in the case  $\beta_1^n<\beta_2^n$. An analogous argument applies
to the case $I_1=\{1,3,4\}$.

It remains to consider the cases $I_1=\{1,2,3\}$ and  $I_1=\{2,3,4\}.$ Assume, for instance,
$I_1=\{1,2,3\}$, the other case is similar. Then by \eqref{tres} we obtain
\begin{equation}\label{idde}
\frac{\Lambda_1^n\Lambda_2^n}{|t_2^n-t_1^n|^{N-1}}
= \frac{\beta_1^n-\beta_2^n}{\beta_1^n+\beta_2^n}\cdot
  \frac{\Lambda_1^n\Lambda_3^n}{|t_3^n-t_1^n|^{N-1}}+O(1)
=\frac{\Lambda_2^n\Lambda_3^n}{|t_3^n-t_2^n|^{N-1}}+O(1)
\to +\infty.
\end{equation}
In particular we have $\beta_1^n>\beta_2^n$. Using \eqref{pooh1}-\eqref{pooh2}, the first and
the second equality in \eqref{idde} give
$$
\Lambda_2^n=o(\Lambda_3^n),\quad \Lambda_2^n=o(\Lambda_1^n),
$$
respectively. Now we multiply the first identity in \eqref{idde} by $t_2^n-t_1^n$ and the
second by $t_3^n-t_2^n$ and, summing up, we obtain
 \begin{equation}\label{cassa}
(\beta_1^n+\beta_2^n)\frac{\Lambda_1^n\Lambda_2^n}{|t_2^n-t_1^n|^{N-2}}
 -(\beta_1^n-\beta_2^n)\frac{\Lambda_1^n\Lambda_3^n}{|t_3^n-t_1^n|^{N-2}}
 +(\beta_1^n+\beta_2^n)\frac{\Lambda_2^n\Lambda_3^n}{|t_3^n-t_2^n|^{N-2}}
= o(1).
\end{equation}
 We may also assume
\begin{equation}\label{elle4}
\Lambda_4^n\geq c.
\end{equation}
 Otherwise, if $\Lambda_4^n\to 0$, then \eqref{motiv} for $i=4$ would give
$$
(\Lambda_4^n)^2 h(t_4^n, t_4^n)+\Lambda_4^n(\Lambda_1^ng(t_1^n,t_4^n)+\Lambda_3^ng(t_3^n,t_4^n))
= 1+\frac{\beta_1^n-\beta_2^n}{\beta_1^n+\beta_2^n}\Lambda_2^n \Lambda_4^ng(t_2^n,t_4^n)
\geq 1
$$
by which either $(\Lambda_4^n)^2 h(t_4^n, t_4^n)\geq \frac12$ or
$\Lambda_4^n(\Lambda_1^ng(t_1^n,t_4^n)+\Lambda_3^ng(t_3^n,t_4^n))\geq \frac12$. If
$(\Lambda_4^n)^2 h(t_4^n, t_4^n)\geq \frac12$, then $h(t_4^n, t_4^n)\to +\infty$, and,
consequently, $t_4^n\to b$, so that we are again in the case 3. Otherwise, if
$\Lambda_4^n(\Lambda_1^ng(t_1^n,t_4^n)+\Lambda_3^ng(t_3^n,t_4^n))\geq \frac12$, then
$g(t_1^n,t_4^n)+g(t_3^n,t_4^n)\to +\infty$. So $t_4^n\to \bar t$ and then
$$
\frac{\Lambda_{1}^n\Lambda_{4}^n}{|t_1^n-t_4^n|^{N-1}}
 +\frac{\Lambda_{3}^n\Lambda_{4}^n}{|t_3^n-t_4^n|^{N-1}}
\geq \sigma_N(N-2)\Lambda_{1}^n\Lambda_{4}^n\frac{g(t_1^n, t_4^n)}{|t_1^n-t_4^n|}
 +\sigma_N(N-2)\Lambda_{3}^n\Lambda_{4}^n\frac{g(t_3^n, t_4^n)}{|t_3^n-t_4^n|}
\to +\infty,
$$
contradicting that $4\not \in I_1$.

Now we distinguish three cases. First assume
\begin{equation}\label{casse1}
\Lambda_1^n,\;\Lambda_2^n,\; \Lambda_3^n\to 0.
\end{equation}
Then \eqref{cassa} can be rewritten as
$$
(\beta_1^n+\beta_2^n)\Lambda_1^n\Lambda_2^ng(t_1^n,t_2^n)
 -(\beta_1^n-\beta_2^n)\Lambda_1^n \Lambda_3^ng(t_1^n,t_3^n)
 +(\beta_1^n+\beta_2^n)\Lambda_2^n\Lambda_3^ng(t_2^n,t_3^n)
= o(1).
$$
We sum the identities \eqref{motiv} in $i=1,2,3$ and, using the above estimate and
\eqref{casse1}, we obtain
$$
(\beta_1^n+\beta_2^n)\Lambda_1^n\Lambda_4^ng(t_1^n,t_4^n)
 -(\beta_1^n-\beta_2^n)\Lambda_2^n \Lambda_4^ng(t_2^n,t_4^n)
 +(\beta_1^n+\beta_2^n)\Lambda_3^n\Lambda_4^ng(t_3^n,t_4^n)
= 3(\beta_1^n+\beta_2^n)+o(1).
$$
However, combining this with  \eqref{motiv} for $i=4$ gives
$$
(\beta_1^n+\beta_2^n)(\Lambda_4^n)^2h(t_4^n, t_4^n)+2(\beta_1^n+\beta_2^n)=o(1)
$$
and a contradiction arises because of \eqref{pooh2}.

Now assume that
\begin{equation}\label{casse2}
\Lambda_1^n,\,\Lambda_2^n\to 0,\quad \Lambda_3^n\geq c.
\end{equation}
Then $\Lambda_1^n\Lambda_2^n=o(\Lambda_2^n\Lambda_3^n)$.  According to \eqref{idde} we have
$\frac{\Lambda_1^n\Lambda_2^n}{|t_2^n-t_1^n|^{N-1}}
 =(1+o(1))\frac{\Lambda_2^n\Lambda_3^n}{|t_3^n-t_2^n|^{N-1}}$,
from which we deduce $t_2^n-t_1^n=o(t_3^n-t_2^n)$.
Consequently
$\frac{\Lambda_1^n\Lambda_2^n}{|t_2^n-t_1^n|^{N-2}}
=o(\frac{\Lambda_2^n\Lambda_3^n}{|t_3^n-t_2^n|^{N-2}})$,
which is equivalent to
$\Lambda_1^n\Lambda_2^ng(t_1^n,t_2^n)=o(\Lambda_2^n\Lambda_3^ng(t_2^n,t_3^n))$.
Now \eqref{bou} implies $\Lambda_1^n\Lambda_2^ng(t_1^n,t_2^n)=o(1)$, hence
\eqref{geppy1} becomes
$$
(\beta_1^n+\beta_2^n)\Lambda_1^n\Lambda_4^ng(t_1^n,t_4^n)
= \beta_1^n+\beta_2^n+(\beta_1^n-\beta_2^n)\Lambda_1^n \Lambda_3^ng(t_1^n,t_3^n)+o(1)
\geq \beta_1^n+\beta_2^n+o(1)
$$
because $\beta_1^n>\beta_2^n$. Then $\Lambda_1^n\Lambda_4^ng(t_1^n,t_4^n)\geq c$, which implies
$g(t_1^n,t_4^n)\to +\infty$ by \eqref{casse2}. So, $t_4^n\to \bar t$ and then
$$
\frac{\Lambda_{1}^n\Lambda_{4}^n}{|t_1^n-t_4^n|^{N-1}}
\geq \sigma_N(N-2)\Lambda_{1}^n\Lambda_{4}^n\frac{g(t_1^n, t_4^n)}{|t_1^n-t_4^n|}
\to +\infty,
$$
in contradiction with $4\not \in I_1$.

An analogous argument applies when
\begin{equation*}
\Lambda_3^n,\,\Lambda_2^n\to 0,\quad \Lambda_1^n\geq c.
\end{equation*}
Finally, assume that
\begin{equation}\label{casse3}
\Lambda_2^n\to 0,\quad \Lambda_1^n,\,\Lambda_3^n\geq c.
\end{equation}
Then we obtain, using \eqref{sist-},
$$
\Lambda_1^n\Lambda_3^n \leq \frac{n}{ g(t_1^n,t_3^n)} \leq C n |t_1^n-t_3^n|^{N-2}
\leq Cn( |t_1^n-t_2^n|^{N-2}+ |t_2^n-t_3^n|^{N-2})
\leq Cn(\Lambda_1^n\Lambda_2^n+ \Lambda_2^n\Lambda_3^n)
$$
where the last inequality follows from \eqref{idde}. So, using \eqref{casse3}, we deduce
$c\leq \Lambda_1^n\Lambda_3\leq Cn \Lambda_2^n,$ by which  $\Lambda_2^n\geq \frac{c}{n}$.
Combining this with \eqref{elle4} and \eqref{casse3} we obtain
$$
\Lambda_1^n\cdot\Lambda_2^n\cdot\Lambda_3^n\cdot\Lambda_4^n\geq \frac{c}{n}.
$$
Finally \eqref{sist-}, \eqref{bou1} and \eqref{bou} imply
$$
\tilde{\Psi}^* = \tilde{\Psi} ( \bbm[\Lambda]_n,\bbm[t]_n)
= -\frac{n}{2}+O(1)-\log (\Lambda_1^n\Lambda_2^n\Lambda_3^n\Lambda_4^n)
\leq -\frac{n}{2}+O(1)+\log n\to -\infty
$$
in contradiction with the lower estimate \eqref{roxx}.

\bigskip

{\bf Case 5:  Conclusion.}

\bigskip

In order to not fall again in the cases 1-2, we assume:
$$
|\beta_1^n-\beta_2^n|\Lambda_1^n\Lambda_3^ng(t_1^n, t_3^n)\leq C,
 \quad  |\beta^n_1-\beta_2^n|\Lambda_2^n\Lambda_4^ng(t_2^n, t_4^n)\leq C.
$$
So, as in the cases 3 and 4 we immediately get \eqref{pooh1}--\eqref{bou} and, in particular,
$\Lambda_i^n\leq C$ for any $i=1,2,3,4$.
Moreover we may also assume
\begin{equation}\label{dindin}
\Lambda_i^n\geq c\quad \forall i=1,2,3,4.
\end{equation}
Indeed, assume for instance, that $\Lambda_1^n\to 0$. Then, by \eqref{motiv} for $i=1$  we
have that, either
\begin{equation}\label{conclu1}
(\Lambda_1^n)^2h(t_1^n,t_1^n)\geq  c,
\end{equation}
or
\begin{equation}\label{conclu2}
\exists j=2,3,4\ \hbox{ such that }\
|(-1)^{1+j}\beta_1^n-\beta_2^n| \Lambda_1^n\Lambda_j^ng(t_1^n,t_j^n)\geq c.
\end{equation}
If \eqref{conclu1} holds, then $h(t_1^n,t_1^n)\to +\infty$, which implies $\bar t_1=a$ or
$\bar t _1=b$ by \eqref{coercive}, and we are back in the case 3. On the other hand, if
\eqref{conclu2} holds, then, $g(t_1^n,t_j^n) \to +\infty$ for some $j\neq 1$, which implies
$\bar t_j=\bar t_1$, and we are either in the case 3 (if $\bar t_1=a,b$) or in case 4 (if
$\bar t_1\in (a,b)$). Finally \eqref{sist-}, \eqref{bou1}, \eqref{bou}, \eqref{dindin} imply
$$
\tilde{\Psi}^*=\tilde{\Psi} ( \bbm[\Lambda]_n,\bbm[t]_n)=-\frac{n}{2}+O(1)\to -\infty
$$
in contradiction with the lower estimate \eqref{roxx}.

\appendix
\renewcommand{\theequation}{\Alph{section}.\arabic{equation}}

\section{Some properties of the Green's function}
Let $\Omega$ be a bounded domain with a ${\cal C}^2$-boundary. We denote by $G(x,y)$ the
Green's function of $-\Delta$ on $\Omega$ under Dirichlet boundary conditions, and by $H(x,y)$
its regular part, as in the introduction. So $H$ satisfies
\begin{equation*}
\left\{
\begin{aligned}
&\Delta_yH(x,y)=0 &\hbox{ }&y\in\Omega,\\
&H(x,y)=\frac{1}{(N-2)\sigma_N|x-y|^{N-2}} &\hbox{ }&y\in\partial \Omega.
\end{aligned}
\right.
\end{equation*}
We recall that $H$ is a smooth function in $\Omega\times \Omega$; moreover $G$ and $H$ are
symmetric in $x$ and $y$ and $G, H>0$ in $\Omega\times\Omega$.

The diagonal $H(x,x)$ is called the Robin's function of the domain $\Omega$ and satisfies
\begin{equation}\label{coercive}
H(x,x)\to +\infty\quad \hbox{ as }{\rm{d}}(x):=\dist(x,\partial\O)\to 0.
\end{equation}
Let $H_0$ be the minimum value of the Robin's function:
$$H_0=\min_{\Omega}H(x,x)>0.$$
Recall that the Robin's function of a convex bounded domain is strictly convex (\cite{cara}).

We  need the following  result concerning the behavior of the regular part $H(x,y)$ near the
boundary. To this aim we fix $\delta>0$ sufficiently small such that the projection onto
$\partial \Omega$ is well defined in the region $\Omega_0:=\{x\in\Omega:{\rm d}(x)<\delta\}$;
we denote this projection by $p:\O_0 \to \partial\O$. It is of class ${\cal C}^1$ because
$\partial\Omega$ is of class ${\cal C}^2$. Moreover, for $x\in \Omega_0$, we write
$\bar x=2p(x)-x$ for the reflection of $x$ at $\partial \Omega$ and
$\nu_x=\frac{x-p(x)}{|x-p(x)|}$ for the inward unit normal at $p(x)$.

\begin{lemma}\label{robin}
Let $\Omega$ be a bounded domain with a ${\cal C}^2$-boundary. Then the following expansions
hold uniformly for $x\in \Omega_0$ and $y\in \Omega$:
$$
H(x,y)=\frac{1}{(N-2)\sigma_N|\bar x-y|^{N-2}}+O\bigg(\frac{{\rm{d}}(x)}{|\bar x-y|^{N-2}}\bigg),
$$
and
$$
\frac{\partial H}{\partial \nu_x}(x,y)
=\frac{1}{(N-2)\sigma_N}\frac{\partial}{\partial\nu_x}\bigg(\frac{1}{|\bar x-y|^{N-2}}\bigg)
  +O\bigg(\frac{1}{|\bar x-y|^{N-2}}\bigg).
$$
\end{lemma}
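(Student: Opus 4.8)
The plan is to treat, for each fixed $x\in\Omega_0$, the remainder
$$R(x,y):=H(x,y)-\frac{1}{(N-2)\sigma_N|\bar x-y|^{N-2}}$$
as a harmonic function of $y\in\Omega$, and to control it (and later its $\nu_x$-derivative) by its boundary values through the maximum principle. Two elementary geometric facts will be used throughout, valid after shrinking $\delta$: for $x\in\Omega_0$ the reflected point $\bar x=2p(x)-x$ lies outside $\overline\Omega$ with $\dist(\bar x,\partial\Omega)={\rm d}(x)$, so that $y\mapsto|\bar x-y|^{-(N-2)}$ is harmonic and positive on a neighbourhood of $\overline\Omega$; and for every $z\in\partial\Omega$ one has
$$|x-z|^2-|\bar x-z|^2=4\,{\rm d}(x)\,\nu_x\cdot(p(x)-z),\qquad |\nu_x\cdot(p(x)-z)|\le C|p(x)-z|^2,$$
together with $|x-z|\asymp|\bar x-z|\asymp {\rm d}(x)+|p(x)-z|$. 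The first identity is a direct computation from $x=p(x)+{\rm d}(x)\nu_x$ and $\bar x=p(x)-{\rm d}(x)\nu_x$; the quadratic bound comes from writing $\partial\Omega$ near $p(x)$ as a $C^2$-graph over its tangent plane (with constants uniform in $p(x)$, by compactness of $\partial\Omega$), and the comparison of distances follows.

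For the first expansion, observe that $\Delta_yH(x,y)=0$ and $\bar x\notin\overline\Omega$ make $R(x,\cdot)$ harmonic on $\Omega$ and continuous on $\overline\Omega$ (a $C^2$ domain being regular for the Dirichlet problem). On $\partial\Omega$ one has $H(x,z)=\frac{1}{(N-2)\sigma_N}|x-z|^{-(N-2)}$, so $R(x,z)=\frac{1}{(N-2)\sigma_N}\big(|x-z|^{-(N-2)}-|\bar x-z|^{-(N-2)}\big)$; applying the mean value theorem to $s\mapsto s^{-(N-2)/2}$ and inserting the facts above gives $|R(x,z)|\le C\,{\rm d}(x)\,|\bar x-z|^{-(N-2)}$ on $\partial\Omega$, with $C$ depending only on $\Omega$ and $N$. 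Since $w(y):=C\,{\rm d}(x)\,|\bar x-y|^{-(N-2)}$ is harmonic on $\Omega$ and $w\pm R(x,\cdot)\ge 0$ on $\partial\Omega$, the minimum principle yields $|R(x,y)|\le w(y)$ in $\Omega$, which is the first expansion, uniformly in $x$ and $y$ because all constants are.

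For the normal-derivative statement I would differentiate along the normal ray $t\mapsto x(t)=p(x)+t\nu_x$ through $x$, so that $\overline{x(t)}=p(x)-t\nu_x$, and set $F(t,y)=H(x(t),y)-\frac{1}{(N-2)\sigma_N}|\overline{x(t)}-y|^{-(N-2)}$. Since $H$ is $C^\infty$ on $\Omega\times\Omega$, for $t$ in a small interval around ${\rm d}(x)$ the difference quotients of $F$ in $t$ are harmonic in $y$, continuous on $\overline\Omega$, and converge uniformly on $\partial\Omega$ to $\partial_tF(t,\cdot)|_{\partial\Omega}$ (the boundary data being smooth in $(t,z)$ there); the maximum principle upgrades this to uniform convergence on $\overline\Omega$, so $\partial_tF(t,\cdot)$ is harmonic on $\Omega$, continuous on $\overline\Omega$, and equals the harmonic extension of its boundary trace. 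The same two geometric ingredients then show that trace is $O(|\bar x-z|^{-(N-2)})$ on $\partial\Omega$ — one power of ${\rm d}(x)$ is lost upon differentiating in $t$, which is exactly why the error in the second expansion carries no factor ${\rm d}(x)$ — and comparing with the harmonic function $C|\bar x-y|^{-(N-2)}$, then evaluating at $t={\rm d}(x)$, gives the claim.

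The only delicate point is this passage to the boundary for the derivative: one must justify, uniformly in $x$, that $\partial_tF(t,\cdot)$ admits a continuous extension to $\overline\Omega$ with the stated boundary data, which is precisely what the uniform convergence of the $t$-difference quotients on $\partial\Omega$ (together with the maximum principle) delivers. The remaining steps — the elementary estimates for $|x-z|^{-(N-2)}-|\bar x-z|^{-(N-2)}$ and its $t$-derivative in terms of ${\rm d}(x)$ and $|p(x)-z|$ — are routine and I would not carry them out in detail here.
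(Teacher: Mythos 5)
Your proposal is correct and follows essentially the same route as the paper: compare $H(x,\cdot)$ with the kernel reflected at $\bar x=2p(x)-x$, bound the difference on $\partial\Omega$ using the $C^2$-graph (quadratic) estimate for the boundary near $p(x)$, and conclude by the maximum principle, repeating the scheme for the normal derivative via the boundary identity $H(x,z)=\frac{1}{(N-2)\sigma_N}|x-z|^{-(N-2)}$. Your difference-quotient justification that $\partial_t F(t,\cdot)$ is harmonic with the expected boundary trace is a more careful rendering of a step the paper takes for granted, not a different method.
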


\begin{proof}
During the proof we will often use the symbols $c$, $C$ to denote different positive constants
depending only on $\O$. For any $x\in \Omega_0$ we introduce a diffeomorphism which straightens
the boundary near $p(x)$. Let $T_x$ be a rotation and translation of coordinates which maps
$p(x)$ to $0$ and the unit inward normal $\nu_x$ to the vector ${\bf e}_N:=(0,\ldots, 0,1)$.
Then $T_x (x) = (0,\ldots,0,{\rm{d}}(x))$, $T_x (\bar x) = (0,\ldots,0,-{\rm{d}}(x))$,
and in some neighborhood of $0$ the boundary $\partial (T_x \Omega)$ can be represented by
$$
z_N=\rho_x(z'), \quad  z'=(z_1,\ldots, z_{N-1});
$$
here $\rho_x$ is a $\cal C^2$ function satisfying $\rho_x(0)=0$ and $\nabla \rho_x(0)=0$.
Therefore we have
$$
|z_N|\leq C |z'|^2\quad \hbox{ on }\partial (T_x \Omega).
$$
First we prove the following estimate for the boundary points:
\begin{equation}\label{bouesti1}
\bigg|\frac{1}{|x-{y}|^{N-2}}-\frac{1}{|\bar x-y|^{N-2}}\bigg|
 \leq C\frac{ {\rm{d}}(x)}{|\bar x- y|^{N-2}}\quad \forall x\in\Omega_0,\;
\forall y\in \partial \Omega.
\end{equation}
In order to see this, we observe for $x\in\Omega_0$, $y\in\partial \Omega$, $z:=T_x(y)$, that
\begin{equation}\label{dile2}
\max\{{\rm{d}}(x),|z'|\} \le \min\{|x-y|,|\bar x-y|\},
\end{equation}
by which
\begin{equation}\label{lab}
\big||x-{y}|^{2}-|\bar x-y|^{2}\big|=4{\rm{d}}(x) z_N\leq C{\rm{d}}(x) |z'|^2
 \leq C{\rm{d}}(x) \min\{|\bar x-{y}|^2, |x-y|^2\}.
\end{equation}
The above inequality implies
\begin{equation}\label{dill}
c\leq \frac{|\bar x- y|}{|x-y|}\leq C\quad \forall x\in \Omega_0, \;
\forall y\in\partial \Omega.
\end{equation}
Taking into account that
$|a^m-b^m|\leq m|a-b|(a+b)^{m-1}$  for any $ a, b\geq 0$ and $m\geq 1$, we have
$$
\begin{aligned}
\bigg|\frac{1}{|x-y|^{N-2}}-\frac{1}{|\bar x-{y}|^{N-2}}\bigg|
 &=\bigg|\frac{|\bar{x}-y|^{2(N-2)}-|x-y|^{2(N-2)}}
              {(|\bar{x}-y|^{N-2}+|x-y|^{N-2})|{x}-y|^{N-2}|\bar{x}-y|^{N-2}}\bigg|\\
 &\leq(N-2)\frac{(|\bar x-{y}|^2+|x-y|^2)^{N-3}(|x-\bar{y}|^{2}-|x-y|^{2})}
                {(|\bar x-{y}|^{N-2}+|x-y|^{N-2})|x-y|^{N-2}|\bar x-{y}|^{N-2}}
\end{aligned}
$$
and \eqref{bouesti1} follows by using \eqref{lab} and \eqref{dill}. So, for any $x\in\Omega_0$,
the functions $H(x,y)-\frac{1}{\sigma_N(N-2)|\bar x-y|^{N-2}}$ and $\frac{1}{|\bar x-y|^{N-2}}$
are both harmonic in $\Omega$ in the variable $y$, and verify \eqref{bouesti1} on the boundary.
Then the maximum principle applies and gives
$$
\bigg|H(x,y)-\frac{1}{\sigma_N(N-2)|\bar x-y|^{N-2}}\bigg|
\leq C\frac{{\rm{d}}(x)}{|\bar x-y|^{N-2}}
\quad \forall x\in \Omega_0, \;\forall y\in \Omega.
$$
The first part of the thesis follows.

We go on with the normal derivative estimate. We claim the following estimate on the boundary:
\begin{equation}\label{bouesti2}
\bigg|\frac{\partial H}{\partial \nu_x}(x,y)
 -\frac{(\bar x-y)\cdot \nu_x}{\sigma_N|\bar x- y|^{N}}\bigg|
= \bigg|\frac{(y-x)\cdot \nu_x}{\sigma_N|x-y|^{N}}
  -\frac{(\bar x-y)\cdot \nu_x}{\sigma_N|\bar x- y|^{N}}\bigg|
\leq \frac{C}{|\bar x- y|^{N-2}}
\quad \forall x\in \Omega_0,\; \forall y\in\partial\Omega.
\end{equation}
Indeed, proceeding as for \eqref{bouesti1}  we have
\begin{equation}\label{above}
\bigg|\frac{1}{|x-y|^{N}}-\frac{1}{|\bar x- y|^{N}}\bigg|
 \leq C\frac{{\rm{d}}(x)}{|\bar x- y|^N}\leq \frac{C}{|\bar x- y|^{N-1}}
 \quad \forall x\in \Omega_0,\; \forall y\in \partial \Omega
\end{equation}
where the second inequality holds since ${\rm{d}}(x)\leq |\bar x-y|$ by \eqref{dile2}.
Moreover, for $x\in\Omega_0$, $y\in\partial \Omega$, $z:=T_x(y)$,
\begin{equation}\label{dile3}
\begin{aligned}
|(y-x)\cdot \nu_x-(\bar x-y)\cdot \nu_x|
 &=|(z-{\rm{d}}(x) {\bf e}_N)\cdot {\bf e}_N-(-{\rm{d}}(x) {\bf e}_N-z)\cdot {\bf e}_N|\\
 &=2|z_N|\leq C|z'|^2\leq C|\bar x-{y}|^2
\end{aligned}
\end{equation}
where for the last inequality we have used \eqref{dile2}.
Thus we obtain for $x\in \Omega_0$ and $y\in \partial \Omega$:
$$
\bigg|\frac{(y-x)\nu_x}{|x-y|^{N}}-\frac{( \bar x-y)\nu_x}{|\bar x- y|^{N}}\bigg|
 \leq |(y-x) \nu_x|\bigg|\frac{1}{|x-y|^{N}}-\frac{1}{|\bar x- y|^{N}}\bigg|
       +\frac{|(y-x)\nu_x-(\bar x- y) \nu_x|}{|\bar x- y|^{N}}
$$
and \eqref{bouesti2} follows from \eqref{dill}, \eqref{above}, \eqref{dile3}.
Now, for $x\in \Omega_0$ fixed, the functions
$\frac{\partial H}{\partial \nu_x}(x,y)-\frac{(\bar x-y)\cdot \nu_x}{\sigma_N|\bar x- y|^N}$
and $\frac{1}{|\bar x-y|^{N-2}}$ are harmonic in $\Omega$ with respect to the variable $y$,
and verify \eqref{bouesti2} on the boundary. The maximum principle applies and gives
$$
\bigg|\frac{\partial H}{\partial \nu_x}(x,y)
 -\frac{(\bar x-y)\cdot \nu_x}{\sigma_N|\bar x- y|^N}\bigg|
 \leq \frac{C}{|\bar x-y|^{N-2}}\quad \forall x\in\Omega, \;\forall y\in \Omega_0.
$$
In order to conclude observe that $\frac{\partial \bar x}{\partial \nu_x }=-\nu_x$, because
$\frac{\partial  p}{\partial \nu_x}(x)=0$ for any $x\in\Omega_0$, so that
$$
\frac{\partial }{\partial \nu_x }\bigg(\frac{1}{(N-2)|\bar x-y|^{N-2}}\bigg)
 =\frac{(\bar x-y)\cdot \nu_x}{|\bar x- y|^N}\quad \forall x\in\Omega_0,\;\forall y\in\Omega.
$$
\end{proof}

We conclude this section with the following lemma which is concerned with the behaviour of
$G(\cdot,y)$ along half-lines through the domain starting from $y$. This implies
\eqref{prop2} for convex domains.

\begin{lemma}\label{lem}
Let $\Omega$ be a convex and bounded domain with a smooth boundary. Then for any
$x,y\in\Omega$, $x\neq y$,  we have
$$
(x-y)\cdot\nabla_xG(x,y)<0.
$$
\end{lemma}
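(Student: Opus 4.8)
The plan is to fix $y\in\Omega$, write $u(x):=G(x,y)$, and study the directional (``radial'') derivative emanating from $y$,
\[
w(x):=(x-y)\cdot\nabla_x u(x),
\]
which is exactly the quantity to be shown negative. The key observation is that $w$ is harmonic on $\Omega\setminus\{y\}$: writing $w=\sum_i(x_i-y_i)\partial_i u$ and differentiating twice one finds $\Delta w = 2\Delta u + (x-y)\cdot\nabla(\Delta u)$, and both terms vanish since $u$ is harmonic away from $y$. (Equivalently, $w=\frac{d}{d\lambda}\Big|_{\lambda=1}u(y+\lambda(x-y))$, and dilates of harmonic functions are harmonic.) Thus it suffices to control $w$ on the boundary of the punctured domain, i.e.\ near $y$ and on $\partial\Omega$, and then invoke the maximum principle.

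Near $y$ I would split $u=s+H(\cdot,y)$, where $s(x)=\frac{1}{(N-2)\sigma_N|x-y|^{N-2}}$ is the Newtonian singularity and $H(\cdot,y)$ is the smooth regular part. A direct computation gives $(x-y)\cdot\nabla s = -(N-2)s\to -\infty$ as $x\to y$, while $(x-y)\cdot\nabla_x H(x,y)$ stays bounded (in fact tends to $0$); hence $w(x)\to -\infty$ as $x\to y$, uniformly on the spheres $\partial B_\varepsilon(y)$ as $\varepsilon\to0$. On $\partial\Omega$, since $u>0$ in $\Omega$, $u=0$ on $\partial\Omega$, and $u$ is harmonic near $\partial\Omega$, Hopf's boundary lemma (valid because $\partial\Omega$ is smooth) gives $\partial u/\partial\nu<0$ there, $\nu$ denoting the outer unit normal; as the tangential derivatives of $u$ vanish on $\partial\Omega$, we have $\nabla_x u(x)=\frac{\partial u}{\partial\nu}(x)\,\nu(x)$, and therefore
\[
w(x)=\frac{\partial u}{\partial\nu}(x)\,\big[(x-y)\cdot\nu(x)\big]\qquad\text{on }\partial\Omega.
\]
Here convexity enters: the tangent hyperplane to $\partial\Omega$ at $x$ is a supporting hyperplane of $\overline\Omega$, and it does not meet the open set $\Omega$, so $(y-x)\cdot\nu(x)<0$ for $y\in\Omega$, i.e.\ $(x-y)\cdot\nu(x)>0$. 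Hence $w<0$ on all of $\partial\Omega$, and by compactness $\max_{\partial\Omega}w<0$.

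To conclude, fix $\varepsilon>0$ small enough that $w<0$ both on $\partial B_\varepsilon(y)$ (possible since $w\to-\infty$ there) and on $\partial\Omega$; then $w$ is harmonic on $\Omega_\varepsilon:=\Omega\setminus\overline{B_\varepsilon(y)}$, continuous up to the boundary, and negative on $\partial\Omega_\varepsilon$, so the maximum principle yields $w<0$ throughout $\Omega_\varepsilon$. Letting $\varepsilon\to0$ gives $w<0$ on $\Omega\setminus\{y\}$, which is the assertion; applied to $x=(t,0,\dots,0)$ and $y=(s,0,\dots,0)$ it gives $(t-s)\,\partial g/\partial t(t,s)<0$, i.e.\ \eqref{prop2}. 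The only genuinely substantive point is recognizing that the radial derivative $w$ is harmonic; after that the argument is a sign analysis on $\partial\Omega$ in which convexity is used exactly to force $(x-y)\cdot\nu(x)>0$, and the mild bookkeeping of combining the blow-up of $w$ at $y$ with the maximum principle is handled by excising a small ball.
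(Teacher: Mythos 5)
Your proof is correct, but it follows a genuinely different route from the paper's. You work directly with the radial derivative $w(x)=(x-y)\cdot\nabla_xG(x,y)$: it is harmonic on $\Omega\setminus\{y\}$ (your dilation/Euler-field computation is right), it tends to $-\infty$ at the pole since the singular part contributes $-\frac{1}{\sigma_N|x-y|^{N-2}}$ while the regular part stays bounded, and on $\partial\Omega$ it equals $\frac{\partial G}{\partial\nu}(x,y)\,\big[(x-y)\cdot\nu(x)\big]<0$ by Hopf's lemma combined with the fact that a supporting hyperplane of the convex set $\overline\Omega$ cannot meet the open set $\Omega$; the maximum principle on $\Omega\setminus\overline{B_\varepsilon(y)}$ then concludes. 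The paper instead quotes a Rellich--Pohozaev-type boundary integral identity of Grossi and Takahashi (Lemma 3.1 in \cite{grotak}) and specializes it to $P=B$: since $(x-B)\cdot\nu_x<0$ on $\partial\Omega$ for a convex domain ($\nu_x$ the inner normal) and the normal derivatives of $G$ are positive there, the identity immediately yields $(B-A)\cdot\nabla_x G(A,B)\geq (N-2)G(A,B)>0$, which is the assertion with the roles of the signs made explicit. Both arguments use the same geometric inputs (the sign of $(x-y)\cdot\nu$ on the boundary of a convex domain, and the nonvanishing of $\partial G/\partial\nu$ via Hopf), but yours is elementary and self-contained at the cost of length, whereas the paper's is a two-line deduction from a citation that moreover gives a quantitative strengthening. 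Your method recovers that stronger bound as well: the function $w+(N-2)G(\cdot,y)$ has a removable singularity at $y$ (the singular parts cancel exactly), is harmonic in $\Omega$, and coincides with $w<0$ on $\partial\Omega$, so the maximum principle gives $(x-y)\cdot\nabla_xG(x,y)<-(N-2)G(x,y)$ in $\Omega\setminus\{y\}$.
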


\begin{proof} We use Lemma 3.1 in \cite{grotak} which states that if $\Omega$ is a smooth and
bounded domain in $\R^N$, then, for any $P\in\Omega$, $A,B\in\Omega$, $A\neq B$,
$$
-\int_{\partial\Omega}(x-P)\cdot \nu_x
 \frac{\partial G(x,A)}{\partial \nu_x}\frac{\partial G(x,B)}{\partial \nu_x}ds
=(2-N)G(A,B)+(P-A)\nabla_x G(A,B)+(P-B)\nabla_xG(B,A),
$$
where $\nu_x$ is the unit inner normal at $x\in\partial \Omega$. Now assume that $\Omega$ is
convex and take $P=B$. We deduce
$$
(B-A)\nabla_x G(A,B)
 =-\int_{\partial\Omega}(x-B)\cdot \nu_x\frac{\partial G(x,A)}{\partial \nu_x}
   \frac{\partial G(x,B)}{\partial \nu_x}ds + (N-2)G(A,B)
$$
which is strictly positive because $(x-B)\cdot\nu_x<0$ for any $x\in\partial \Omega$ by the
convexity of $\Omega$, and because
$\frac{\partial G(x,A)}{\partial \nu_x},\, \frac{\partial G(x,B)}{\partial \nu_x}>0$ on
$\partial\Omega$.
\end{proof}

\end{document}